\newtheorem{theorem}{Theorem}[section]%  meant for continuous numbers
\newtheorem{proposition}[theorem]{Proposition}% 
\newtheorem{lemma*}{Lemma}[section]%
\newtheorem{cor}{Corollary}[section]
\theoremstyle{remark}%
\newtheorem{remark}{Remark}[section]%
\theoremstyle{definition}%
\newtheorem{definition}{Definition}%
\newcommand{\N}{\Delta} 
\newcommand{\fg}{\mathfrak g}
\newcommand{\fn}{\mathfrak g}
\newcommand{\id}{{\rm id}}
\newcommand{\R}{\mathbb{R}}
\pgfplotsset{compat=1.17}
\begin{document}

\title[Optimal synthesis on path groups]{Sub-Riemannian optimal synthesis for Carnot groups with the structure of a path geometry}

%%=============================================================%%
%% Prefix	-> \pfx{Dr}
%% GivenName	-> \fnm{Joergen W.}
%% Particle	-> \spfx{van der} -> surname prefix
%% FamilyName	-> \sur{Ploeg}
%% Suffix	-> \sfx{IV}
%% NatureName	-> \tanm{Poet Laureate} -> Title after name
%% Degrees	-> \dgr{MSc, PhD}
%% \author*[1,2]{\pfx{Dr} \fnm{Joergen W.} \spfx{van der} \sur{Ploeg} \sfx{IV} \tanm{Poet Laureate} 
%%                 \dgr{MSc, PhD}}\email{iauthor@gmail.com}
%%=============================================================%%

\author{\fnm{Ale\v s} \sur{N\' avrat}, \fnm{Lenka} \sur{Zalabov\' a}}\email{ales.navrat@vutbr.cz}

\affil{\orgdiv{Institute of Mathematics}, \orgname{Faculty of Mechanical Engineering,  Brno University of Technology}, \orgaddress{\street{Technick\' a 2896/2}, \city{Brno}, \postcode{616 69}, \country{Czech Republic}}}

%%==================================%%
%% sample for unstructured abstract %%
%%==================================%%

\abstract{This paper explicitly constructs the complete set of optimal sub-Riemannian geodesics starting from a point for certain Carnot groups of step two. These are groups of dimension 2n+1 equipped with a left-invariant distribution of dimension n+1 such that at each point, there is a unique direction defining a nontrivial Lie bracket. A suitable explicit expression of geodesics, together with symmetries of the structure, allows us to identify the cut time and the cut locus by applying the so-called extended Hadamard technique. 
%The result obtained gives a generalization of the result from the sub-Riemannian geometry on the Heisenberg group.
}

\keywords{local control and optimality, Carnot groups, symmetries, sub--Riemannian geodesics}

%%\pacs[JEL Classification]{D8, H51}

%%\pacs[MSC Classification]{35A01, 65L10, 65L12, 65L20, 65L70}

\maketitle

\section{Introduction}
The research on optimal curves on Carnot groups is motivated by the study of nonholonomic robots and their optimal control, see \cite{hzn1,hzn2,hz}. Such mechanical systems are defined by nonholonomic constraints that induce a sub-Riemannian geometry on their configuration spaces, \cite{b96,J}. 
For solving the local control of a nonholonomic mechanism, its configuration space is approximated by a Carnot group, and a geodesic in this Carnot group describes the evolution of the mechanical system.

For an introductory understanding of sub-Riemannian geometry, see references  \cite{ABB,cg09}. Roughly speaking, this notion is a generalization of Riemannian geometry, where the main difference is that some directions of travel are explicitly forbidden. Similarly as Euclidean spaces are the most elementary examples of the Riemannian geometry, the simplest and the most symmetric examples of the sub-Riemannian geometry are Carnot groups. The key feature is the existence of a sub-Riemannian metric that measures the length of admissible curves, i.e., those curves whose tangents do not point in forbidden directions. The sub-Riemannian geodesics are such curves that locally minimize this length. Their properties are quite different from the Riemannian setting. In general, the problem of global optimality of geodesics is hard, even for Carnot groups. The solution is known only in a few cases, and it is based on specific symmetries of the given structure and fine analysis of functions defining the geodesics, see \cite{ABB,ABB12,mya1,mya2,Rizzi,Sac, talianky}. This paper aims to add a new class of Carnot groups to this series of solved examples. These are Carnot groups with the structure of a path geometry - a structure that may be viewed as a higher-dimensional generalization of the well-known Heisenberg group. Although it shares some common features with the Heisenberg case, new properties of geodesics emerge and lead to a different optimal synthesis.

Further in this section, a background on sub-Riemannian geometry and Carnot groups is given. 
The plan for the next sections is as follows. At first, the path geometry structure is described in detail, and its symmetries are found. Next, an explicit formula for sub-Riemannian geodesics and their characterization is obtained. In the last section, the results from the previous sections are applied to obtain the optimal synthesis via a generalization of the classical Hadamard technique from Riemannian geometry. The main result of this paper about the cut time and the cut locus is stated in Proposition \ref{cut_time_prop} and Theorem \ref{thm}.

\subsection{Sub-Riemannian geodesics on Carnot groups}
\label{SRgeodesics}
Let us recall that a sub-Riemannian manifold is defined by a  bracket-generating distribution in the tangent bundle determining the allowed directions of travel and by a metric on this distribution. A {\it Carnot group} $G$ is a connected and simply connected Lie group whose Lie algebra $\mathfrak{g}$ admits a decomposition
$$\mathfrak{g}=\mathfrak{g}_1\oplus \mathfrak{g}_2\oplus\cdots\oplus \mathfrak{g}_r,$$
satisfying $[\mathfrak{g}_1,\mathfrak{g}_i]=\mathfrak{g}_{i+1},$ where we set $\mathfrak{g}_{r+1}=0.$ The smallest such integer $r$ is called the {\it step} of the Carnot group. We will be interested in Carnot groups of step two, as mentioned above.
A choice of an inner product on the first layer $\mathfrak{g}_1$ automatically endows $G$ with a left-invariant sub-Riemannian structure $(G,\Delta,g)$, where $\Delta$ is bracket generating distribution defined by left-invariant vector fields whose value at the identity belongs to $\mathfrak{g}_1.$ The sub-Riemannian metric $g$ is defined on two vector fields in $\Delta$ by the inner product of their values at the identity.

Let us also recall that a Lipschitz curve $\gamma:\:[0,T]\to G$ is called horizontal if $\dot{\gamma}(t)\in\Delta_{\gamma(t)}$, and that its length is measured with respect to the sub-Riemannian metrics $g$. 
%The sub-Riemannian distance between two points $q,q'\in G$ is defined as the infimum of such lengths of horizontal curves that join $q$ with $q'$.
By {\it sub-Riemannian geodesics}, we mean a non-trivial horizontal curve parameterized by constant speed that locally minimizes the length between its endpoints. 
By viewing the geodesics as the solution to an optimal control problem, it is given by the projection of an extremal curve $\lambda\in T^*G$. Thanks to Goh condition, there do not exist so-called strictly abnormal curves in the case of two-step Carnot groups, see, e.g., \cite[Corollary 12.15]{ABB}. Therefore, each geodesic is the projection of a normal extremal parametrized by arclength, i.e., it is an integral curve of the Hamiltonian vector field corresponding to the sub-Riemannian Hamiltonian
$$
H: T^*G\to \mathbb{R},\quad H(\lambda)=\frac12 \sum_{i=1}^m\langle \lambda,X_i(q)\rangle^2,
$$
where  $q=\pi(\lambda)\in G$ is the projection of $\lambda\in T_q^*G$ to its base point, and where $\{X_1,\dots,X_m\}$ is a family of left-invariant vector fields generating distribution $\Delta$.
%In contrast to the Riemannian case, there are more geodesics issuing from a given point in a given direction, namely such geodesics depend on $\dim G - \dim \Delta$ parameters. 
This Hamiltonian description  shows that geodesic curves issuing from a point $q_0$ are parameterized by the initial cylinder of normalized co-velocities
$$
\Lambda_{q_0}=T^*_{q_0}G\cap H^{-1}(1/2).
$$
Viewing these parameters as variables, we get the {\it exponential map}
$$
\exp_{q_0}:\mathbb{R}\times \Lambda_{q_0}\to G,\quad \exp_{q_0}(t,\lambda_0)=\gamma(t),
$$
where the geodesic curve $\gamma(t)$ is the projection of normal extremal $\lambda(t)$ such that $\lambda(0)=\lambda_0.$
Since $\exp_{q_0}(t,\lambda_0)=\exp_{q_0}(1,t\lambda_0)$ due to the homogeneity property of the Hamiltonian flow,  the exponential map can also be seen as a map $T_{q_0}^*G\to G$.

\subsection{Sub-Riemannian cut locus}
The geodesics on a sub-Riemannian manifold are locally length-minimizing by definition, but they usually lose their global optimality at a certain point.
The {\it cut time} of a geodesic is the critical time when the geodesic stops to be globally length-minimizing,
$$
t_{cut}(\gamma):=\operatorname{sup}\{t>0\;|\; \gamma_{[0,t]} \text{ is a minimizing geodesics}\}>0.
$$
The corresponding point $\gamma(t_{cut}(\gamma))$ on the horizontal curve is called the {\it cut point}. The {\it cut locus} $Cut_{q_0}$ of a point $q_0\in G$ is the set of all cut points of geodesics issuing from $q_0$. In the case of two-step Carnot groups, the cut point is either a Maxwell point or the first conjugate point; see Theorem 8.72 in \cite{ABB}. A {\it Maxwell point} is a point such that there exist (at least) two geodesics of the same length between $q_0$ and this point. Such points are closely related to fixed points of symmetries of the exponential map; see \cite{ABB,Rizzi,mya1}. On the other hand, the conjugate points correspond to critical points of the exponential map. Concretely, the {\it first conjugate point} to $q_0=\gamma(0)$ along $\gamma(t)=\exp_{q_0}(t\lambda_0)$  is the point $\gamma(t_{conj}),$ such that $t_{conj}$ is the {\it first conjugate time} defined by  $$t_{conj}(\gamma)=\operatorname{inf}\{t>0\;|\; t\lambda_0 \text{ is a critical point of } \exp_{q_0}\}.$$

The study of the cut and conjugate loci on surfaces is a classical topic in Riemannian geometry. Although the sub-Riemannian case has also been widely studied in the literature, they are fully described only in a few cases. A comprehensive summary can be found in \cite{Rizzi}. 
%Most of them are Carnot groups of step two, e.g. low-dimensional cases like the well-known  Heisenberg group $(2,3)$, the $(3,6)$ group \cite{mya1}, or higher-dimensional special cases like the multi-dimensional Heisenberg group $(2n,2n+1)$ \cite{ABB}. 
Here, we completely describe the cut locus of $(n+1,2n+1)$ Carnot groups equipped with the path geometry structure by applying the extended Hadamard technique, \cite[Section 13.4]{ABB}. This method is based on a classical result stating that if a smooth map between two connected manifolds of the same dimension is proper and its differential is nowhere singular, it is a covering.

%The cut time is known explicitly for certain left-invariant  structures on 3D Lie groups, on some  Stiefel manifolds, and for the following Carnot groups: contact, corank 1, corank 2, the Engel group, (multi-dimensional) Heisenberg groups, H-type groups and generalized H-type groups.

\subsection{Heisenberg group} \label{Heisenberg}
We recall basic facts about the Heisenberg group since it is not only a basic example in sub-Riemannian geometry, but it will play an important role later in this paper. It is the lowest-dimensional nontrivial example of a Carnot group - the step is two, and the grow vector equals $(2,3)$.  The distribution is given by $\Delta=\operatorname{span}\{X_1,X_2\}$, and the vector fields $X_1,X_2,[X_1,X_2]$ span the whole tangent space in each point. 
%It is also the lowest-dimensional example of a path geometry; $E$ is spanned by $X_1$, and $V$ is spanned by $X_2$.  
Let us denote by $(x,\ell,y)$ the coordinates on $\mathbb{R}^3$. Then, the generating vector fields of the distribution are usually chosen such that 
\begin{align}
    \label{Heisenberg_fields}
    X_1=\partial_{x}+\frac{\ell}{2}\partial_y, \quad
    X_2=\partial_{\ell}-\frac{x}{2}\partial_y.
\end{align}
The distribution is bracket generating at every point since $[X_1,X_2]=-\partial_y$. The sub-Riemannian metric is defined by declaring $X_1,X_2$ to be orthonormal, i.e. we have $g=dx\otimes dx+d\ell\otimes d\ell$ in the chosen coordinates. The group law in the Heisenberg group reads
\begin{align}
\label{Heisenberg_multiplication}
\begin{pmatrix}
    x_1\\\ell_1\\y_1
\end{pmatrix}
\cdot
\begin{pmatrix}
    x_2\\\ell_2\\y_2
\end{pmatrix}
=\begin{pmatrix}
    x_1+x_2\\\ell_1+\ell_2\\y_1+y_2+\tfrac12(x_1\ell_2-\ell_1x_2)
\end{pmatrix}.
\end{align}
It is easy to see that the right multiplication sends horizontal curves to horizontal curves of the same length. In other words, it is a symmetry of the distribution, and it is also a symmetry of the exponential map. Consequently, each geodesic from a point $q_0$ is a translation of a geodesic issuing from the identity. In particular, the problem of optimal analysis is reduced to an analysis of geodesics issuing from the identity. The result is 
well known, see e.g., \cite{ABB, mope}. Namely, if the cylinder of initial normalized co-vectors is parameterized as $\Lambda_0=\{(\cos\alpha,\sin\alpha,\rho)\;|\; \rho\in \mathbb{R},\alpha\in S^1\},$ then for $\rho\neq 0$ the exponential map on Heisenberg group reads
\begin{align} \label{geodesics_Heisenberg}
\begin{split}
x(t)&=\frac{1}{\rho}\left(-\cos(\rho t+\alpha)+\cos\alpha\right),
\\
\ell(t)& =
\frac{1}{\rho}
\left(
\sin(\rho t+\alpha)-\sin\alpha 
\right),
\\
y(t)&=
\frac{1}{2\rho^2} \left(\rho t-\sin(\rho t)\right),
\end{split} 
\end{align}
and for $\rho=0$ one obtain simply straight lines $(\cos(\alpha) t,\sin(\alpha) t,0)$. A deeper analysis of formula \eqref{geodesics_Heisenberg} shows that the Heisenberg cut locus for geodesics issuing from the origin is the horizontal axis formed by points $(0,0,y).$ The cut time for a geodesic is when the geodesic meets this axis for the first time. In our parameterization, the cut time is given by
\begin{align} \label{cut_time_Heisenberg}
t_{cut} = \sqrt{4\pi y} = \frac{2\pi}{\rho}.
\end{align}
It is both the first conjugate time, $t_{conj}=t_{cut},$ and also the first Maxwell time. Indeed, the rotation around the horizontal axis is a  symmetry of the exponential map and leaves all points on this axis fixed. Thus, it generates an infinite number of geodesics of the same length to each cut point. Namely, any choice of parameter $\alpha\in S^1$ in \eqref{geodesics_Heisenberg} gives a different geodesics from the origin to cut point $(0,0,y),$ where $y=\pi/\rho.$

\section{The path geometry structure on a Carnot group}  \label{path_section}
Classically, a path geometry on a smooth manifold $M$ is given by a smooth family of unparameterized curves on $M$ such that for each point and each direction, there is a unique curve through this point in this direction. It can also be seen as an equivalence class of second-order ordinary differential equations 
\begin{align} \label{ODE}
y_i''(x)=f_i(x,y_j(x),y_j'(x)), \qquad i,j=1,\dots,n.
\end{align}
under point transformations. Indeed, regarding $(x,y_i)$ as local coordinates on $M$, the solutions to system  \eqref{ODE} locally give a family of immersed curves in $M$, with one curve in each direction. 
These curves (paths) lift to the projectivized tangent bundle $\mathcal{P}TM$, i.e., the space of all lines through the origin in $TM$. The lifts give rise to a line bundle $E$ in the tangent bundle of $\mathcal{P}TM$ with specific properties. Namely, the tautological subbundle of $T\mathcal{P}TM$ splits as $E\oplus V$, where $V$ is the vertical subbundle of $\mathcal{P}TM\to M$, and the Lie bracket of vector fields induces an isomorphism $E\otimes V \to T\mathcal{P}TM/(E\oplus V)$. The decomposition $E\oplus V$ can be used as an alternative definition of the path geometry and shows in turn that the structure is equivalent to the Cartan geometry of a parabolic type, see \cite{CS,Zadnik}.  
%In canonical coordinates $(x,y_i,\ell_i)$ on $\mathcal{P}TM$, where the fibre coordinate $\ell_i$ corresponds to the line in tangent space $TM$ spanned by vector $\partial_x+\ell_i\partial_{y_i},$ the line bundle is given by   
%\begin{align*}
%E=\operatorname{span}\{dy_i-\ell_idx,d\ell_i-f_i(x,y_i,\ell_i)dx\}^0,
%\end{align*} 
%where the symbol $^0$ denotes the annihilator of the $2n$-rank subbundle of $T^*\mathcal{P}TM$.

This paper studies sub-Riemannian structures on path geometries that are flat in the sense of Cartan, i.e., the most symmetric geometries.
In the above description, the flat model corresponds to the elementary second-order ODE system $y_i''(x)=0$; the associated paths are lines, and the point transformations are projective linear maps, hence one can set $M=\mathbb{R}P^{n+1}$. The lines in projective space canonically define curves in $\mathcal{P}T\mathbb{R}P^{n+1}$ that foliate this space, and thus their tangent spaces give rise to the line subbundle $E$.  Indeed, $\Delta=E\oplus V$ is a bracket generating distribution of rank $n+1$ on $\mathcal{P}T\mathbb{R}P^{n+1}$ that satisfies $$[V,E]=T\mathcal{P}T\mathbb{R}P^{n+1}/\Delta,$$
and all other Lie brackets vanish. 
%In particular, the structure is nilpotent of step two.
In this paper, such a left-invariant distribution $\Delta=E\oplus V$ on a Carnot group is considered. Via group multiplication, the structure may be defined by certain properties of the corresponding Lie algebra.

\subsection{Definition and basic properties of the structure}
A Carnot group $G$ of step two with Lie algebra $\fg=\fg_{1}\oplus\fg_{2}$ has the structure of a path geometry if $\fg_{1}$ splits into the direct sum of a 1-dimensional component $\fg_{1}^E$ and an $n$-dimensional component $\fg_{1}^V$ such that the Lie bracket gives a linear isomorphism $\fg_{1}^E\otimes\fg_{1}^V\to\fg_{2}$. In particular it means that $\dim(\fg_{1}^V)=\dim(\fg_{2})=n$ hence the growth vector of such a Carnot group is $(n+1,2n+1)$.
We give a simple and explicit definition in terms of a basis of Lie algebra $\fg$. 
\begin{definition}
	A Carnot group $G$ of step two has the structure of a path geometry if it has a growth vector $(n+1,2n+1)$, where $n\in\mathbb{N}$, and if there exists a basis $\{e_0,e_1,\dots,e_{2n}\}$ of the associated Lie algebra $\fg$ such that the only nontrivial commutation relations are given by
	\begin{align} \label{com_relations}
	e_{n+i}=[e_i,e_0], \qquad i=1,\dots, n.
	\end{align}	
	In the sequel, such Carnot group $G$ will be called shortly a path group.
\end{definition}
In other words, the path group $G$ is a Carnot group such that the structure constants of the corresponding Lie algebra $\fn$ satisfy $c_{i0}^j=-c_{0i}^j=\delta_i^j$ for each $i,j=1,\dots,n$  and they are zero otherwise.
%\begin{table}[h] 	
%	\begin{center}
%		\begin{tabular}{ |c| c  c c |} 
%			\hline
%			$[\,,]$& $e_0$ & $e_i$  & $e_{n+i}$  \\
%			\hline 
%			$e_0$ & $0$ & $\!-e_{n+i}$ &$0$    \\ 
%			
%			$e_i$ & $e_{n+i}$ & $0$ &$0$  \\ 
%			
%			$e_{n+i}$ & $0$ & $0$  &$0$    \\ 
%			\hline 
%		\end{tabular}
%	\end{center}
%	\caption{Lie algebra $\fn$	}
%\label{tab}
%\end{table}
 In our basis, the Lie algebra decomposition $\fg=\fg_{1}^E\oplus\fg_{1}^V\oplus\fg_{2}$ described above is given by
\begin{align} \label{path_basis}
\fg_{1}^E=\operatorname{span}\{e_0\},\; \fg_{1}^V=\operatorname{span}\{e_1,\dots,e_n\},\;	\fg_{2}=\operatorname{span}\{e_{n+1},\dots,e_{2n}\}.
\end{align} 
The path group $G$ is endowed with the left-invariant sub-Riemannian structure $(G,\N,g)$, where $\N$ is a $(n+1)$-dimensional distribution equipped with a natural decomposition
\begin{align}
\label{decom}
\N=E\oplus V = \operatorname{span}\{X_0\}  \oplus \operatorname{span}\{X_1,\dots,X_{n}\},
\end{align}
where $X_0,X_1,\dots,X_{n}$  are left-invariant vector fields corresponding to $\fg_1^E\oplus\fg_1^V$. These vector fields, together with vector fields defined by
\begin{align}
Y_{i}=[X_i,X_0], \qquad i=1,\dots,n
\end{align}
form a privileged frame for the distribution $\Delta$.
Using the canonical coordinates of the first kind defined by the exponential map from $\fg$ onto $G$, the path group $G$ can be identified with $\R^{n+1}\oplus\R^n$, where the first layer further splits as $\R^{n+1}=\R\oplus\R^n$. Namely, the coordinates map is given by
\begin{align} \label{coordinates}
    \exp(xX_0+\ell_iX_i+y_iY_i) \mapsto (x,\ell_i,y_i),
\end{align}
where $i=1,\dots,n$. Hence, each point in $G$ can be written as a vector $(x,\ell,y)$ such that $(x,\ell)\in\R\oplus\R^n$ is defined by the first stratum of $\fg$ while $y\in\R^n$ corresponds to the second stratum. The group law on $G$ can be easily computed by Baker-Campbell-Hausdorff formula.  In vector notation, it reads 
\begin{align}\label{grupa}
\begin{pmatrix} 
x \;\;|\; \ell \\ y
\end{pmatrix}
\cdot 
\begin{pmatrix} 
\tilde x \;\;|\;\; \tilde \ell \\ \tilde y
\end{pmatrix}
=
\begin{pmatrix} 
x+  \tilde x \;\;|\;\; \ell + \tilde \ell\\
y+ \tilde y + \frac{1}{2} (\ell \tilde x -x \tilde \ell)
\end{pmatrix}.
\end{align}
Note that we chose this notation to emphasize that both $x\in\R$ and $\ell\in\R^n$ lie in the first layer and that $G$ is of step two.
Differentiating this formula, we get the expression for the left-invariant privileged frame in the canonical coordinates
\begin{align} \label{fields_basis}
\begin{split}
X_0&=\partial_x+\sum\limits_{i=1}^{n}\frac{\ell_i}{2}{\partial_{y_i}}, \quad X_i=\partial_{\ell_i}-\frac{x}{2} \partial_{y_i}, \quad i=1,\dots,n, \\
Y_{i}&= \partial_{y_i},\quad i=1,\dots,n,
\end{split}
\end{align}
where the symbol $\partial$ stands for partial derivative.  Indeed, it is easy to check that these vector fields satisfy commutation relations \eqref{com_relations}. The sub-Riemannian metric $g$ is given by declaring $X_0,\dots,X_n$ orthonormal. In our coordinates, we have
\begin{align}
	\label{SR_metric}
g=dx \otimes dx +\sum\limits_{i=1}^nd\ell_i \otimes d\ell_i.
\end{align}
Indeed,  $g(X_a,X_b)=\delta_{ab}$ for each $a,b=0,\dots,n.$ The path geometry structure can also be described by horizontal condition $(\dot x,\dot \ell, \dot y)\in\Delta$ for a curve in $G$. Using the basis  of $\Delta$ given by \eqref{fields_basis} we see that  
horizontal curves are such curves that satisfy
\begin{align}
\label{horizontal_curve}
\dot y = \frac12 (\dot x \ell - \dot \ell x).
\end{align}

%Indeed, taking derivatives of \eqref{grupa} with respect to $x,\ell_i,y_i$ one obtains the vector fields $X_0,$ $X_i$ and $Y_i,$ respectively. Viewing $G$ as a homogeneous space $SL(n+2,\mathbb{R})/P$ as described above, a point with coordinates $(x,\ell,y)$ can be identified with the class represented by matrix
%\begin{align*}
%\exp
%\begin{pmatrix}
%0&0&0 \\ x & 0 & 0 \\ y & \ell & 0
%\end{pmatrix}
%=
%\begin{pmatrix}
%1&0&0 \\ x & 1 & 0 \\ y +\tfrac12 x\ell & \ell & 1
%\end{pmatrix}
%\subset SL(n+2,\mathbb{R}),
%\end{align*}
%and the group law \eqref{grupa} is obtained by taking the commutator of matrices followed by the logarithm.

\begin{remark}
	 Note that the privileged coordinates are chosen such that the left-invariant vector fields \eqref{fields_basis}  directly generalize the usual basis of the Heisenberg group. However, there is also a natural choice of the basis motivated by the identification of $G$ with $\mathcal{P}T\mathbb{R}P^{n+1}$ described above. Namely, choosing the canonical coordinates  $(x,y_i,\ell_i)$ on $\mathcal{P}T\mathbb{R}P^{n+1}$, we get
\begin{align*}
E
%=\operatorname{span}\{dy_i-\ell_idx,d\ell_i\}^0
=\operatorname{span}\{\partial_x+\sum_{i=1}^{n}\ell_i\partial_{y_i}\}, \quad V=\operatorname{span}\{\partial_{\ell_i}\},\quad T\mathcal{P}T\mathbb{R}P^{n+1}/(E\oplus V)=\operatorname{span}\{\partial_{y_i}\}.
\end{align*} 
%$V$ of $\mathcal{P}T\mathbb{R}P^{n+1}\to \mathbb{R}P^{n+1}$, indeed 
%  
  %Namely, in terms of canonical coordinates $(x,y_i,\ell_i)$, the line bundle is  $E=\text{span}\{\partial_x+\ell_i\partial_{y_i}\}$ while the vertical bundle reads $V=\text{span}\{\partial_{\ell_i}\}$. 
\end{remark}

\subsection{Symmetries of the structure} 
By symmetries of a sub-Riemannian structure $(G,\N, g)$, we mean automorphisms of $G$ preserving the distribution $\N$  and sub-Riemannian metric $g$. The continuous symmetries in the connected component of identity can be described as flows of infinitesimal symmetries, i.e., the flows of vector fields $v$ such that $\mathcal{L}_v(\Delta) \subset \Delta$ and $\mathcal{L}_vg=0$. The advantage of passing to the Lie algebra level is that such infinitesimal symmetries can be algorithmically constructed by so-called Tanaka prolongation (or by another prolongation technique), see \cite{tan}. The algebraic prolongation shows that the infinitesimal symmetries are isomorphic to $\fn\oplus\mathfrak{so}(n)$. The first summand corresponds to the right-invariant vector fields, while the infinitesimal symmetries isomorphic to $\mathfrak{so}(n)$ are represented by vector fields  
	\begin{align}\label{so3}
	r_{ij}=\ell_i\partial_{\ell_j}-\ell_j\partial_{\ell_i}+y_i\partial_{y_j}-y_j\partial_{y_i},
	\quad i<j \text{ and } i,j=1,\dots,n.
	\end{align} 
%The algebraic Tanaka prolongation determines the algebra of infinitesimal symmetries by purely algebraic tools while the geometric Tanaka prolongation procedure gives concrete representatives of the algebra. 
Indeed, it is easy to check that it preserves both the distribution and the metric. 
The vector fields $r_{ij}$ integrate to symmetries that act as simultaneous rotations in planes $\langle\ell_i,\ell_j\rangle$ and $\langle y_i,y_j\rangle$ while they leave the coordinate $x$  invariant. 

\begin{proposition}
There are two families of continuous symmetries on a path group $G=(\R\oplus\R^n)\oplus\R^n$. (a) Non stabilizing: the right translation by $q^{-1}$ given by \eqref{grupa} is a symmetry that maps $q$ to the origin for each $q\in G$.
	(b) Stabilizing the origin: For each $R \in SO(n)$, there is a symmetry that, in the vector notation, reads
	\begin{align}
	\label{action}
	\begin{pmatrix} 
	x \;\;|\;\; \ell \\ y
	\end{pmatrix} 
	\mapsto 
	\begin{pmatrix} 
	x \;\;|\;\; R\ell \\ Ry
	\end{pmatrix}.
	\end{align}
	\label{corsym}
\end{proposition}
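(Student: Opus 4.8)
The plan is to read the definition literally: a symmetry is a diffeomorphism of $G$ that preserves the distribution $\N$ and restricts to an isometry of $g$ on it. For the two explicit maps in the statement the most economical test is at the level of the generating frame: it suffices to show that each map carries the orthonormal family $X_0,X_1,\dots,X_n$ of \eqref{fields_basis} to an orthonormal family again spanning $\N$. Equivalently, by \eqref{horizontal_curve} and \eqref{SR_metric}, it is enough to verify that the map sends horizontal curves to horizontal curves and leaves the first-layer length element $\sqrt{\dot x^2+\sum_i\dot\ell_i^2}$ unchanged. I would set up both criteria and then check (a) and (b) in turn. Completeness of the list is not what is claimed here — that is already delivered by the Tanaka prolongation $\fg\oplus\mathfrak{so}(n)$ recalled above, whose two summands match the two families — so only the two inclusions have to be produced.

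For part (a) the statement that the translation by $q^{-1}$ takes $q$ to the origin is read off immediately from \eqref{grupa}. To see that it is a symmetry I would substitute the translated coordinates into \eqref{horizontal_curve}. Writing $q^{-1}=(a\,|\,b,c)$, the image of a curve $(x,\ell,y)$ has first-layer coordinates shifted only by the constants $a$ and $b$, so its velocities satisfy $\dot{\bar x}=\dot x$ and $\dot{\bar\ell}=\dot\ell$ (bars denoting the image), whereas the quadratic cross term of \eqref{grupa} adds to $\dot{\bar y}$ precisely the velocity-linear correction produced when one expands $\tfrac12(\dot{\bar x}\,\bar\ell-\dot{\bar\ell}\,\bar x)$. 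The mechanism is that the sign in the cross term of \eqref{grupa} is arranged so that these two corrections cancel and \eqref{horizontal_curve} is preserved. Length is then automatic, since \eqref{SR_metric} involves only $dx$ and $d\ell_i$ and a constant shift of $x,\ell$ does not change $\dot x,\dot\ell$.

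For part (b) I would first verify that $\Psi_R\colon(x\,|\,\ell,y)\mapsto(x\,|\,R\ell,Ry)$ is a group automorphism, which reduces to the equivariance of the cross term: because $x,\tilde x$ are scalars and $R$ is linear, $\tfrac12\big((R\ell)\tilde x-x(R\tilde\ell)\big)=R\,\tfrac12(\ell\tilde x-x\tilde\ell)$, which is exactly the rule by which the second-layer coordinate transforms; hence $\Psi_R(p\cdot p')=\Psi_R(p)\cdot\Psi_R(p')$ and $\Psi_R$ fixes the origin. Differentiating the coordinate formula then gives $X_0\mapsto X_0$ and $X_i\mapsto\sum_j R_{ji}X_j$, and since $R\in SO(n)$ the latter is again an orthonormal frame spanning $\N$; thus $\Psi_R$ preserves both $\N$ and $g$, as claimed.

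I do not expect a conceptual obstacle; the entire argument is careful bookkeeping of the quadratic term in \eqref{grupa}. The one step where a sign slip would be fatal is part (a): one must confirm that the cross-term convention of \eqref{grupa} is indeed the one for which the one-sided translation preserves \eqref{horizontal_curve}. This is the same phenomenon that makes translations symmetries in the Heisenberg case of Section \ref{Heisenberg}, but since the cross-term sign in \eqref{grupa} is opposite to that in \eqref{Heisenberg_multiplication}, I would not argue by analogy and would instead carry out the cancellation explicitly, in particular pinning down which side's translation does the job for this convention. For part (b) the only thing to watch is that $R$ must act simultaneously and by the same matrix on the $\ell$- and $y$-blocks; this is forced by the automorphism condition and is the finite counterpart of the infinitesimal symmetries $r_{ij}$ in \eqref{so3}.
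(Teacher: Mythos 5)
Your overall strategy is the same as the paper's: its proof of (b) is exactly the check that the rotations preserve the horizontality condition \eqref{horizontal_curve} and the metric \eqref{SR_metric}, and its proof of (a) is the one-line remark that translations are obvious symmetries of a left-invariant structure. Part (b) of your argument is complete and correct; the automorphism verification and the frame computation $X_0\mapsto X_0$, $X_i\mapsto\sum_j R_{ji}X_j$ are a slightly more explicit version of what the paper does, and your observation that $R$ must act by the same matrix on both blocks is the finite counterpart of \eqref{so3}, as you say.

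There is, however, a genuine unresolved point in part (a), located exactly where you flag it but assert the wrong outcome. Carry out the cancellation for the \emph{right} translation $p\mapsto p\cdot q^{-1}$ with $q^{-1}=(a\,|\,b,c)$: the cross term of \eqref{grupa} gives $\bar y=y+c+\tfrac12(\ell a-xb)$, hence $\dot{\bar y}=\dot y+\tfrac12(\dot\ell a-\dot x b)$, while horizontality of the image requires $\dot{\bar y}=\tfrac12\bigl(\dot x(\ell+b)-\dot\ell(x+a)\bigr)=\dot y-\tfrac12(\dot\ell a-\dot x b)$. The two correction terms have opposite signs, so they add rather than cancel: the right translation does \emph{not} preserve \eqref{horizontal_curve}. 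The frame \eqref{fields_basis} is left-invariant (it is obtained by differentiating left multiplication), so the translations that are symmetries are the \emph{left} translations $p\mapsto q^{-1}\cdot p$; for these the cross term contributes $\tfrac12(b\dot x-a\dot\ell)$ to $\dot{\bar y}$, which is precisely the required correction, and the map still sends $q$ to the origin, so the substance of the proposition is unaffected. Your middle paragraph claims the cancellation for the map as written, which is the one false step; your closing caveat about pinning down ``which side's translation does the job'' is the right instinct, and executing it would repair your proof (and would also correct the left/right wording in the statement and in the paper's own one-line argument, which commits the same slip).
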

\begin{proof}
  (a) The right translations are obvious symmetries of each left-invariant structure. (b) The simultaneous rotations preserve the distribution since they evidently leave the condition for horizontal curves \eqref{horizontal_curve} invariant. By definition of the orthogonal group, they also preserve the sub-Riemannian metric \eqref{SR_metric}.
\end{proof}
The translations will allow us to discuss only the properties of sub-Riemannian geodesics issuing from the identity element
 since they act transitively and freely on $G$. On the other hand, the rotational symmetries will help us to reduce the dimensions of spaces when solving optimality questions.
Note that we do not discuss the discrete symmetries now. Actually, we will use one of the discrete symmetries of the exponential map later to find the cut locus and to resolve the problem of global optimality of geodesics on $G$.
%These symmetries can be also found directly from the above description of $G$ as homogeneous space $SL(n+2)/P$, see the  decomposition of $\mathfrak{sl}(n+2)$ in \eqref{parabolic}. Indeed, the prolongation $\fn_0$ is formed by block-diagonal matrices with blocks of size 1,1, and $n$, where the latter block is isomorphic to $\mathfrak{so}(n)$ since it must preserve the inner product on $\fg_{1}^V$ inducing the metric $g$. Hence the corresponding Lie group is isomorphic to $SO(n)$ and its adjoint action on $\fn$ gives \eqref{action} in privileged coordinates,
%\begin{align*}
%\begin{pmatrix}
%1 & 0 &0 \\ 0 & 1 & 0 \\ 0 & 0 & R
%\end{pmatrix}
%\begin{pmatrix}
%0 & 0 &0 \\ x & 0 & 0 \\ y & \ell & 0
%\end{pmatrix}
%\begin{pmatrix}
%1 & 0 &0 \\ 0 & 1 & 0 \\ 0 & 0 & R
%\end{pmatrix}^{-1}=
%\begin{pmatrix}
%0 & 0 &0 \\ x & 0 & 0 \\ Ry & R\ell & 0
%\end{pmatrix}.
%\end{align*}

\subsection{The action of $SO(n)$ on $G$} \label{orbits}
%In general, the points that are stabilized by some symmetries are of interest since they are among candidates for points where sub-Riemannian geodesics lose their optimality. So 
Let us discuss the stabilizers of points in $G$ under the action of symmetries from Proposition \ref{corsym}.  The right translations obviously do not stabilize any point of $G$. 
%On the other hand, they translate the second family of symmetries, i.e., the action of $SO(n)$, from the origin to any other point. It means that all points in $G$ have the same stabilizer under the action of $SO(n)$, and thus, it is sufficient to study the situation at the origin. 
Looking at formula \eqref{action} for the action of $SO(n)$ we realize that there are three types of points with different (non-isomorphic) stabilizers. Namely, for a point  $q=(x,\ell,y)\in G$ it depends on whether vectors $\ell,y\in\R^n$ are zero or nonzero linearly dependent or linearly independent - the points with dependant vectors have a larger stabilizer. A concrete description of the stabilizers, orbits, and quotient spaces is summarized in the following proposition.
\begin{proposition} \label{factor_spaces}
	Under the action of $SO(n)$, the path group $G=\R\oplus\R^n\oplus\R^n$ decomposes into three invariant sets $G=G_0\sqcup G_1 \sqcup G_2$, where 
	\begin{align} \label{G1}
	G_k=\{(x,\ell,y)\in G \;|\; \dim(\operatorname{span}\{\ell,y\})=k\}, \; k=0,1,2.
	\end{align}
	The group $SO(n)$ acts trivially on $G_0=\id$ while for $G_1$ and $G_2$  we have a description as follows.
	\begin{enumerate}
		\item The stabilizer of each point in $G_1$ is isomorphic to $SO(n-1)$, and the quotient space $G_1/SO(n)$ is parameterized by three invariants $(x,|\ell|,|y|)$, where $|\phantom{.}|$ denotes the Euclidean norm in $\R^n$.
		\item The stabilizer of each point in $G_2$ is isomorphic to $SO(n-2)$, and the quotient space $G_2/SO(n)$ is parameterized by four invariants $(x,|\ell|,|y|, \varphi)$, where $\varphi\in (0,\pi)$ is the angle between vectors $\ell,y$. Alternatively, one can use invariants $(x,\ell^2,\ell\cdot y,\ell\wedge y)$ or $(x,\ell^2,\ell\cdot y,y^2)$, where $\cdot$ stands for the standard Euclidean scalar product in $\mathbb R^n$.
	\end{enumerate}
\end{proposition}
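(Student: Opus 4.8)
The plan is to notice that the $SO(n)$-action \eqref{action} fixes the coordinate $x$ and acts as the diagonal action on the pair $(\ell,y)\in\R^n\times\R^n$, so the whole statement reduces to the classical description of $SO(n)$-orbits on ordered pairs of vectors in $\R^n$, with $x$ riding along as a passive invariant. First I would record that every $R\in SO(n)$ is a linear isomorphism and hence preserves $\dim\operatorname{span}\{\ell,y\}$; this immediately shows that the sets $G_0,G_1,G_2$ of \eqref{G1} are invariant and partition $G$, since the span of two vectors has dimension $0$, $1$ or $2$. On $G_0=\{(x,0,0)\}$ we have $\ell=y=0$, so $R\cdot(x,0,0)=(x,0,0)$ and the action is trivial.

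For the orbit classification I would establish the standard fact that two pairs $(\ell,y)$ and $(\ell',y')$ whose spans $W=\operatorname{span}\{\ell,y\}$ and $W'=\operatorname{span}\{\ell',y'\}$ have the same dimension $k<n$ lie in a common $SO(n)$-orbit if and only if they carry the same Gram data $(|\ell|^2,\ell\cdot y,|y|^2)$. One direction is clear, since $R$ preserves the Euclidean inner product and these quantities are therefore constant on orbits (as is $x$). For the converse I would adapt orthonormal bases to $W$ and $W'$: equality of the Gram matrices lets one build an orthogonal map $W\to W'$ sending $\ell\mapsto\ell'$ and $y\mapsto y'$, and the remaining freedom on the complements $W^{\perp},(W')^{\perp}$, both of dimension $n-k\geq 1$, is used to adjust the determinant to $+1$. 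Thus the orbit is determined by $(|\ell|,|y|)$ together with $\ell\cdot y$: in $G_2$ this is equivalent to $(x,|\ell|,|y|,\varphi)$ with $\cos\varphi=\ell\cdot y/(|\ell|\,|y|)$, or to the Gram triple $(x,\ell^2,\ell\cdot y,y^2)$; in $G_1$ the Gram matrix is singular, so $\ell\cdot y=\pm|\ell|\,|y|$ and the continuous data reduces to $(x,|\ell|,|y|)$, the only residual datum being the discrete sign that distinguishes the collinear ($\varphi=0$) from the anti-collinear ($\varphi=\pi$) configurations.

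Next I would compute the stabilizers. A rotation $R$ fixes $(x,\ell,y)$ iff $R\ell=\ell$ and $Ry=y$, which by linearity is equivalent to $R$ fixing $W=\operatorname{span}\{\ell,y\}$ pointwise. Writing $\R^n=W\oplus W^{\perp}$, such an $R$ is the identity on $W$ and acts on $W^{\perp}\cong\R^{n-k}$ by an element of $O(n-k)$; since $\det R=\det(R|_W)\det(R|_{W^{\perp}})=\det(R|_{W^{\perp}})$, the constraint $\det R=1$ forces $R|_{W^{\perp}}\in SO(n-k)$. Hence the stabilizer of every point is isomorphic to $SO(n-k)$, giving $SO(n-1)$ on $G_1$ ($k=1$) and $SO(n-2)$ on $G_2$ ($k=2$). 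Combining the uniform isomorphism type of the stabilizer with the orbit classification then yields the stated parameterizations of $G_1/SO(n)$ and $G_2/SO(n)$.

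The main obstacle is the passage from the orthogonal to the special orthogonal group: the Gram data is a complete invariant for $O(n)$, and one must check that restricting to $\det=+1$ does not split the orbits further. This is precisely why the argument needs room in the complement, $n-k\geq 1$. The delicate situation is therefore $G_2$ with $n=2$, where $W=\R^2$ leaves $W^{\perp}=0$, no freedom remains, and the $SO(2)$-orbit genuinely remembers the orientation of the frame $(\ell,y)$; here the unsigned invariants must be supplemented by the oriented quantity $\ell\wedge y$, which is exactly the alternative invariant offered in the statement. I would handle this planar case separately, observing that for $n\geq 3$ the bivector $\ell\wedge y$ can be reoriented using $W^{\perp}$ and hence contributes only its magnitude $|\ell|\,|y|\sin\varphi$, carrying no information beyond $(|\ell|,|y|,\varphi)$.
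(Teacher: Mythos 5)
Your proposal is correct, and it takes a genuinely different route from the paper's. The paper computes the stabilizers exactly as you do (rotations fixing $\operatorname{span}\{\ell,y\}$ pointwise, hence $SO(n-k)$), but then proceeds by a pure dimension count: $\dim G_1=n+2$ and $\dim G_2=2n+1$, the orbits are $SO(n)/SO(n-k)$ of dimension $n-1$ resp.\ $2n-3$, so the quotients need $3$ resp.\ $4$ scalar invariants, and a ``natural choice'' is then simply named. You instead prove directly that $x$ together with the Gram data $(|\ell|^2,\ell\cdot y,|y|^2)$ is a complete invariant, by building an isometry $W\to W'$ from equal Gram matrices and using the complement $W^\perp$ (of dimension $n-k\geq 1$) to correct the determinant. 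Your approach buys something the paper's counting argument does not actually establish, namely that the listed invariants separate orbits and not merely that the quotient has the right dimension; and it correctly isolates the two spots where the unsigned invariants are not faithful: the case $n=2$ in $G_2$, where $W^\perp=0$ and the orientation of the frame $(\ell,y)$ is a genuine extra $SO(2)$-invariant (so one must pass to the signed $\ell\wedge y$), and the sign of $\ell\cdot y$ on the part of $G_1$ where both $\ell$ and $y$ are nonzero, which $(x,|\ell|,|y|)$ cannot see. The paper is silent on both points. What the paper's route buys in exchange is brevity and an explicit identification of the orbits as homogeneous spaces $SO(n)/SO(n-k)$, which it reuses in the later factorization of the exponential map.
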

\begin{proof}
(1) The  algebraic set $G_1\subset G$ is obviously $SO(n)$-invariant and, as a submanifold, it has dimension $n+2$ since it is defined by $x,\ell$ and the ratio $y/\ell$. By \eqref{action}, the stabilizer of a point $q=(x,\ell,y)\in G_1$ equals the stabilizer of vector $\ell\in\mathbb{R}^n$ under the standard action of $SO(n)$. It consists of rotations in the subspace orthogonal to vector $\ell$, hence is isomorphic to $ SO(n-1)$. Then the orbits in $G_1$ are homogeneous spaces $SO(n)/SO(n-1)\cong S^n$. In particular, they have dimension $n-1$, and thus the corresponding quotient space $G_1/SO(n)$ is parameterized by $n+2-(n-1)=3$ scalar invariants.  A natural choice reads $(x,|\ell|,|y|)$ since $x$ is invariant and the standard action of the orthogonal group on $\R^n$ preserves the Euclidean length of vectors.

(2) The $SO(n)$-invariant set $G_2$ is defined by $\ell\wedge y\neq 0$ and thus is open in $G$ and its dimension is equal to $\dim(G)=2n+1.$ The stabilizer of a point  $q=(x,\ell,y)\in G_2$ under the action of $SO(n)$ is formed by rotations of the orthogonal complement of two-dimensional subspace generated by vectors $\ell,y\in\mathbb{R}^n,$ hence is isomorphic to $SO(n-2).$ The orbits in $G_2$ are then isomorphic to $SO(n)/SO(n-2)$.
%, which can be identified with the set of orthonormal basis of two-dimensional subspaces in $\R^n.$ 
The dimension of this orbit is $2n-3$, thus the quotient space $G_2/SO(n)$ is parameterized by $2n+1-(2n-3)=4$ invariants. By this description, a natural choice for independent invariants is $x$, lengths of vectors $\ell,y$, and the angle $\varphi$ between them. Alternatively, one can use $(x,\ell^2,\ell\cdot y,\ell\wedge y)$ or $(x,\ell^2,\ell\cdot y,y^2)$. The transitions between these sets of coordinates are given by well-known relations  $\ell\wedge y=|\ell||y|\sin\varphi$, $\ell\cdot y=|\ell||y|\cos\varphi$ and $(\ell\wedge y)^2=y^2\ell^2-(\ell\cdot y)^2$.
\end{proof}

\section{Sub-Riemannian geodesics of path geometries} \label{geodesics_section}
We use the standard method based on Pontryagin maximum principle, \cite{ABB}. Since path geometry $(G,\Delta,g)$ is of step two, geodesics are projections of normal Pontryagin extremals, i.e., integral curves of Hamiltonian vector field corresponding to the normal left-invariant sub-Riemannian Hamiltonian 
\begin{align} \label{ham}
H=\frac12(h_0^2+h_1^2\cdots+h_{n}^2),
\end{align}
where the functions $h_0,h_i$ are defined  by evaluating $\lambda\in T^*G$ on a basis of $\Delta$.  Namely, we consider $h_0 (\lambda )=  \lambda(X_0)$, $h_i (\lambda )=  \lambda(X_i),$ where $i=1,\dots,n$, and  $X_0,X_i\in TG$ are the vector fields given by \eqref{fields_basis}. Moreover, the arc-length parameterized geodesics satisfy $H=1/2$ along the solution. As $G$ is a Carnot group, the corresponding Hamiltonian system can be solved by a simple integration. This leads to an explicit formula for geodesics, which we will use in the sequel for their optimality analysis.

\subsection{An explicit formula for geodesics} 
We follow here \cite[Sections 7 and 13]{ABB}. 
Due to its left-invariance, the normal Hamiltonian \eqref{ham} can be considered as a function on $\fn^*.$ Under identification $(\fn^*)^*=\fn$ we have $dh_j=e_j$ for all $j=0,\dots,n$ and $dH=\sum_{j=0}^n h_j e_j\in \fn.$  
Then the Hamiltonian equations on $T^*G$ can be written in a compact and coordinate-free way for $g\in G$ and $\xi=\sum_{j=0}^n h_j e_j^*\in\fn^*$ as
\begin{align}
\begin{split} \label{ham_eqs}
\dot{g}&=L_{g*}dH,\\
\dot{\xi}&=(ad\;dH)^*\xi,
\end{split}
\end{align}
where  $L_{g*}$ is the tangent map to left multiplication and $ad^*$ is the coadjoint action of $\fn$ on $\fn^*.$
To get an explicit formula for its solution we rewrite this system in privileged coordinates on $G$, i.e., we consider  $g=(x,\ell_i,y_i)$, $i=1,\dots,n$, as in \eqref{fields_basis}. By formula \eqref{grupa} for the group multiplication, the horizontal system in the vector notation reads  
\begin{align}
\begin{split} \label{xl}
\begin{pmatrix}
\dot x \\
\dot \ell
\end{pmatrix}
&=
\begin{pmatrix}
  h_0 \\
  h
\end{pmatrix},\\
\dot y &= \frac12(h_0\ell -xh).
\end{split}
\end{align}
For the vertical system we consider coordinates $\xi=(h_0,h_i,w_i)$ on $\fn^*$, and we use the structure of Lie algebra $\fn$ given by formula \eqref{com_relations}. Then, in the vector notation, we get
\begin{align} 
\begin{split} \label{ver_obec} 
\begin{pmatrix}
\dot h_0 \\ \dot h
\end{pmatrix}&= \begin{pmatrix}0 & w^t \\ -w & 0\end{pmatrix} \begin{pmatrix}
h_0 \\  h
\end{pmatrix}, \\
\dot w&=0. %\quad  i=1,\dots,n.
\end{split}
\end{align}

The solution of this Hamiltonian system can be easily found by integrating the equations in a reversed order.  
Indeed, the second equation in \eqref{ver_obec} says that the vector $w$ is constant with respect to $t$. Thus, the first part of the vertical system is homogeneous with constant coefficients and its solution is given by the exponential of the defining matrix. 
\begin{lemma*} \label{vert-p} The solution of fiber system \eqref{ver_obec} is either constant or it is given by 
\begin{align} 
\begin{split}\label{h1}
h_0(t)&=\frac{|K|}{\rho}\sin(\alpha+|K|t), \\ 
h(t)&=
\frac{K}{\rho}\cos(\alpha+|K|t) + K^\perp, \\
w(t)&=K 
\end{split} 
\end{align}
where $K\in\R^n$ is nonzero, $|K|$ is its euclidean norm, $\alpha\in S^1$, $\rho\in\R^+$ and where $K^\perp\in\R^n$  is a vector orthogonal to $K$ with respect to the standard inner product.
\end{lemma*}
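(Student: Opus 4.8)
The plan is to integrate the linear system \eqref{ver_obec} explicitly in the reversed order suggested after the statement. Since the second equation gives $\dot w=0$, the vector $w(t)\equiv K\in\R^n$ is constant, and the system for $(h_0,h)$ becomes linear with constant coefficients, governed by the matrix $A=\begin{pmatrix}0 & K^t\\ -K & 0\end{pmatrix}$. If $K=0$ this matrix vanishes and $(h_0,h)$ is constant, so I would immediately dispose of this degenerate case and assume $K\neq 0$ for the rest of the argument. Note that $A$ is skew-symmetric, so $h_0^2+|h|^2$ is conserved along the flow; this is a consistency check, reflecting that $H$ is a first integral.

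The key observation is that $A$ has rank two, so the motion is a planar rotation in a distinguished $2$-plane and is frozen on its orthogonal complement. Concretely, I would split $h=h^\parallel+h^\perp$ into the components parallel and orthogonal to $K$. The two equations read $\dot h_0=K\cdot h$ and $\dot h=-K h_0$; since the right-hand side of the second one is a multiple of $K$, the orthogonal part satisfies $\dot h^\perp=0$ and is therefore a constant vector, which I would name $K^\perp$ (orthogonal to $K$ by construction). This isolates the non-trivial dynamics to the pair $(h_0,\,K\cdot h)$.

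For that pair I would introduce the scalar $p=(K\cdot h)/|K|$, obtaining the decoupled planar system $\dot h_0=|K|\,p$ and $\dot p=-|K|\,h_0$, i.e. a harmonic oscillator of frequency $|K|$ (equivalently $\ddot h_0+|K|^2 h_0=0$). Its general solution is $h_0(t)=C\sin(\alpha+|K|t)$ for an amplitude $C\ge 0$ and a phase $\alpha\in S^1$, whence $p(t)=C\cos(\alpha+|K|t)$ and $h^\parallel(t)=\tfrac{C}{|K|}\cos(\alpha+|K|t)\,K$. Reassembling $h=h^\parallel+K^\perp$ reproduces the shape of \eqref{h1}.

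Finally I would fix the parameterization to match \eqref{h1} exactly. Replacing $\alpha$ by $\alpha+\pi$ flips the sign of both $h_0$ and $h^\parallel$ simultaneously, so I may assume $C>0$ and set $\rho=|K|/C\in\R^+$; this turns the amplitude $C$ into $|K|/\rho$ and the parallel coefficient into $1/\rho$, giving precisely \eqref{h1}. The residual case $C=0$ yields $h_0\equiv 0$ and $h\equiv K^\perp$ constant, accounting for the \emph{constant} alternative in the statement. I do not expect a serious obstacle here, since the argument is a routine integration of a constant-coefficient linear system; the only points requiring genuine care are the parallel/orthogonal splitting that decouples the dynamics, and the bookkeeping that guarantees $\rho>0$ while correctly absorbing the degenerate constant solutions into the stated dichotomy.
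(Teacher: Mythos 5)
Your proof is correct and reaches the same conclusion by essentially the same route as the paper: both rest on the observation that the dynamics is a rotation of frequency $|K|$ in the plane spanned by $(1,0)$ and $(0,K/|K|)$ inside $\R\oplus\R^n$, and is frozen on the orthogonal complement consisting of vectors $(0,K^\perp)$. The only difference is one of execution: the paper obtains this splitting by computing the spectrum and eigenvectors of the skew-symmetric matrix $\Omega$ and writing out the real form of its exponential, whereas you read it off directly from $\dot h_0=K\cdot h$, $\dot h=-h_0K$ and reduce to a scalar harmonic oscillator --- a slightly more elementary derivation that also disposes of the degenerate constant cases ($K=0$, or $K\neq 0$ with vanishing amplitude) exactly as the paper does.
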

\begin{proof}
If $w(0)=0$ holds then $w(t)=0$ and $h_0(t)$, $h(t)$ are constant. If $w(0)\neq 0$ then $w(t)=K$ for a nonzero constant vector $K\in\R^n$ and  $h_0(t)$, $h(t)$ are given by the exponential of matrix
\begin{align}
\Omega=
\begin{pmatrix} 
0 & K^t\\
-K & 0 \\
\end{pmatrix}.
\label{matrix}
\end{align}
Computing the characteristic polynomial of this matrix,
%$\Omega$ given by  \eqref{matrix} reads
%\begin{align*}
%|\Omega-\lambda 1_{n+1}|=(-\lambda)^{n-1}(\lambda^2+|K|^2),
%\end{align*} 
we find the eigenvalue $0$ of multiplicity $n-1$ and imaginary eigenvalues $\pm i|K|$, where $|K|$ denotes the norm of vector $K$,
\begin{align*}
\text{spec}(\Omega)=\{\pm i |K|,0\}.
\end{align*} 
The eigenspace to eigenvalue $\pm i|K|$, viewed as a subspace of $\R^{n+1}=\R\oplus\R^n$, is generated by  eigenvector $\left( \pm i |K|,K\right)$ while the kernel of $\Omega$ is formed by vectors $(0,K^\perp)$, where $K^\perp\in\R^n$ is  perpendicular to vector $K$. Considering the real basis of the two-dimensional complement to the kernel, the solution to the first part of \eqref{ver_obec} can be written as
%In the basis formed by the real and complex part of this vector together with any basis of the  
%$(n-1)$--dimensional kernel, the matrix $\Omega$ has zeros at all positions except  $(\Omega)_{12}=-(\Omega)_{21}=|K|$. In this eigenvector basis, we get     
%\begin{align}
%e^{t\Omega}  =
%\begin{pmatrix}
%\cos(|K|t) & \sin(|K|t) &  0 \\
% -\sin(|K|t) & \cos(|K|t) &  0 \\
% 0&0& 1_{n-1} 
%\end{pmatrix}
%\end{align}
%and the solution of of the fibre system can be written as 
\begin{align*}
\begin{pmatrix} 
h_0(t)\\
h(t)
\end{pmatrix} 
&=
(C_1\cos(|K|t)+C_2\sin(|K|t))
 \begin{pmatrix} 
|K| \\
0
\end{pmatrix} 
\\
&+
(-C_1\sin(|K|t)+C_2\cos(|K|t)) \begin{pmatrix} 
0 \\
K
\end{pmatrix} 
+\begin{pmatrix} 
0 \\
K^\perp
\end{pmatrix},
\end{align*}
where $C_1,C_2$ are arbitrary real constants. We may assume that at least one of these constants is nonzero since we get a constant solution otherwise. The equation \eqref{h1} is obtained by taking polar coordinates  $C_1=(1/\rho)\sin\alpha$, $C_2=(1/\rho)\cos\alpha$ in the plane of these parameters and by using standard trigonometric formulas.

%$(0,K^\perp)$ is a vector from the kernel of $\Omega$. The orthogonality of $K^\perp$ and $K$ follows from the fact that for $\Omega\in\mathfrak{so}(n+1)$ the kernel  is orthogonal to eigenspaces corresponding to non-zero eigenvalues.
\end{proof}
\begin{remark}
	The relation between coefficients $\rho,\alpha$ from the  proposition above to the initial values of $h_0$ and $h$ in the origin is given by
	 $(1/\rho)\sin\alpha=h_0(0)/|K|$ and $(1/\rho)\cos\alpha=1/|K|^2(h(0)\cdot K)$ while $K^\perp$ is the projection of initial value $h(0)$ to the orthogonal complement of $K$, namely 
	$$K^\perp=h(0)-\frac{h(0)\cdot K}{|K|^2} K.$$
\end{remark}

Now, an explicit description of sub-Riemannian geodesics is obtained by substituting the solution of fiber system from lemma \ref{vert-p} into the horizontal system, followed by the integration.  Note that, as discussed above, we are interested in solutions issuing from the origin.
\begin{proposition} \label{geodesics}
	Let $G$ be the path group with a left-invariant distribution defined by vector fields \eqref{fields_basis} in privileged coordinates $(x,\ell_i,y_i),$ $i=1,\dots,n$. In these coordinates, the sub--Riemannian geodesics on $G$ issuing from the identity are  

	\noindent (1)
lines of the form 
 \begin{align} 
 \begin{split}
 x(t)&=C_0 t \\
 \ell(t)&= C t\\
 y(t)&=0
 \end{split}
 \label{line} 
 \end{align}
  parameterized by $C_0\in\R$, $C\in\R^n$ such that 
$
C_0^2+|C|^2=1,
$

  \noindent (2)
  and curves that are given by equations 
\begin{align} \label{horizontal}
\begin{split}
x(t)&=\frac{1}{\rho}\big(\cos\alpha-\cos(\alpha+|K|t)\big),
\\
\ell(t)& =
\frac{K}{|K|\rho}
\big(\sin(\alpha+|K|t)-\sin\alpha\big)
+{K^\perp}t, 
\\
y(t)&=\frac{K}{2|K|\rho^2}\big(|K|t-\sin(|K|t)\big)+
\frac{K^\perp}{2|K|\rho} \big( 2(\sin(\alpha+|K|t)-\sin\alpha) \\
& -|K|t(\cos(\alpha+|K|t)+\cos\alpha)  \big),
\end{split} 
\end{align}
 parameterized by mutually orthogonal vectors $K,K^\perp\in\R^n$, $K\neq 0$, and constants $\rho\in\R^+, \alpha\in S^1$  such that
\begin{align} \label{level}
\frac{|K|^2}{\rho^2} +|K^\perp|^2 =1.
\end{align}

\label{geo}
\end{proposition}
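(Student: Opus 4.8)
\section*{Proof proposal}

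The plan is to integrate the horizontal system \eqref{xl} directly, feeding in the two possible forms of the fiber solution supplied by Lemma \ref{vert-p} and imposing the initial condition $x(0)=\ell(0)=y(0)=0$ appropriate to geodesics issuing from the identity. The two cases of the statement match the two cases of the Lemma: the constant fiber solution ($w(0)=0$) will produce the lines \eqref{line}, while the oscillating solution \eqref{h1} will produce the curves \eqref{horizontal}. In both cases the constraints on the parameters are forced by the normalization $H=1/2$ built into \eqref{ham}.

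For the first case I set $h_0\equiv C_0$ and $h\equiv C$ constant. Then $\dot x=h_0$ and $\dot\ell=h$ integrate immediately to $x=C_0t$, $\ell=Ct$, and substituting into the vertical equation gives $\dot y=\tfrac12(h_0\ell-xh)=\tfrac12(C_0\,Ct-C_0t\,C)=0$, so $y\equiv 0$; the constraint $C_0^2+|C|^2=1$ is exactly $H=1/2$. This disposes of \eqref{line} with only elementary bookkeeping.

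The substantive case is the second. Writing $\theta=\alpha+|K|t$ and inserting \eqref{h1}, the equations $\dot x=\tfrac{|K|}{\rho}\sin\theta$ and $\dot\ell=\tfrac{K}{\rho}\cos\theta+K^\perp$ are elementary antiderivatives of sine and cosine, and fixing the integration constants by $x(0)=\ell(0)=0$ yields the first two lines of \eqref{horizontal}. The only real work is the vertical coordinate. Substituting the expressions for $h_0,\ell,x,h$ into $\dot y=\tfrac12(h_0\ell-xh)$ and separating the result along the orthogonal directions $K$ and $K^\perp$, the key simplification in the $K$-direction is the angle-difference identity $\sin\theta\sin\alpha+\cos\theta\cos\alpha=\cos(\theta-\alpha)=\cos(|K|t)$, which collapses the bracketed expression to $\tfrac{K}{\rho^2}\bigl(1-\cos(|K|t)\bigr)$; integrating gives the first summand of the $y$-formula. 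In the $K^\perp$-direction one is left with $\tfrac{1}{2\rho}\bigl(|K|t\sin\theta-\cos\alpha+\cos\theta\bigr)$, whose single nonelementary term $|K|t\sin\theta$ must be handled by integration by parts; the resulting antiderivatives then recombine into the stated $K^\perp$-summand of \eqref{horizontal}.

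Finally the normalization $H=1/2$ gives $h_0^2+|h|^2=1$; expanding and using $K\perp K^\perp$ so that the cross term vanishes leaves $\tfrac{|K|^2}{\rho^2}+|K^\perp|^2=1$, which is \eqref{level}. I expect the $y$-integration---in particular tracking the integration by parts in the $K^\perp$-direction and reassembling the pieces into the compact form of \eqref{horizontal}---to be the only step requiring genuine care; everything else follows at once from Lemma \ref{vert-p} and the structure of the horizontal system \eqref{xl}.
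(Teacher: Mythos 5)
Your proposal is correct and follows exactly the paper's route: the lines come from the constant fiber solution, the curves from substituting \eqref{h1} into the horizontal system \eqref{xl} and integrating with zero initial data, and the constraint \eqref{level} from $H=1/2$ together with the orthogonality of $K$ and $K^\perp$. The paper's own proof is a two-line summary of this same computation, so your more detailed account of the integration (including the angle-difference identity and the integration by parts in the $K^\perp$-direction, both of which check out) simply fills in what the paper leaves implicit.
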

\begin{proof}
The system of lines \eqref{line} corresponds to constant solutions of the horizontal system; $C_0=h_0(0), C=h(0)$.
The equation \eqref{horizontal} is obtained by substituting the solution \eqref{h1} of the vertical system into the horizontal system  \eqref{xl} and by integration.
Recall that geodesics are contained in the level set $H=1/2$, i.e., $h_0^2+|h|^2=1$. According to \eqref{h1} and the orthogonality of $K$ and $K^\perp$  this restriction reads as \eqref{level}.
%\begin{align*}
%h_0^2+|h|^2=|K|^2(C_1^2+C_2^2)+|K^\perp|^2 =1.
%\end{align*}
\end{proof}
 
We are mainly interested in sub-Riemannian geodesics emanating from the origin that are not lines, i.e., the geodesics given by  \eqref{horizontal} in Proposition \ref{geodesics}. An important observation following from these equations is that both the $\ell$-part and the $y$-part of such a geodesic lie in a common two--dimensional subspace in $\R^n$ generated by orthogonal vectors $K,K^\perp$. To get simpler equations for the geodesics, we express curves $\ell(t)$ and $y(t)$ in an orthonormal basis formed by unite vectors $k=K/|K|$ and $k^\perp=K^\perp/|K^\perp|$ in this order. 
\begin{cor}
	\label{geodesics_cor}
	 Sub--Riemannian geodesics on path group $G$ issuing from the origin are either lines parametrized by the unite speed in subspace $\{y=0\}$ or curves  given by 
		\begin{align} \label{geodesics_polar}
	\begin{split}
	x(t)&=\frac{1}{\rho}\left(\cos\alpha-\cos(\tau+\alpha)\right),
	\\
	\ell(t)& =
	\frac{1}{\rho}
	\begin{pmatrix}
	\sin(\tau+\alpha)-\sin\alpha \\
	\sigma\tau
	\end{pmatrix},
	\\
	y(t)&=
	\frac{1}{\rho^2}
	\begin{pmatrix}
	\frac12\left(\tau-\sin\tau\right)\\
	\sigma\left(\sin(\tau+\alpha)-\sin\alpha-\frac{\tau}{2}\left(\cos(\tau+\alpha)+\cos\alpha\right)\right) 
	\end{pmatrix},
	\end{split} 
	\end{align}
	  where $\tau=\rho t/\sqrt{1+\sigma^2}$ and where by $\ell(t),y(t)\in\R^2$ we mean coordinates of the corresponding curves in $\R^n$ expressed in an orthonormal basis $(k,k^\perp)$. The curves are parameterized by 
	 $\alpha\in S^1$ and $\rho,\sigma\in\R$ such that $\rho>0,\sigma\geq 0.$ 
\end{cor}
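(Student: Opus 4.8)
The plan is to read the statement as a pure reparameterization of the geodesic family from Proposition \ref{geodesics}, rewritten in the adapted orthonormal frame, rather than as a new integration. First I would dispose of the trivial alternative: the curves \eqref{line} are exactly the unit-speed lines lying in the slice $\{y=0\}$, so the lines part of the corollary is immediate and only the curves \eqref{horizontal} need work.

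For the nontrivial case I would invoke the observation recorded just before the corollary, namely that both $\ell(t)$ and $y(t)$ take values in the two-plane $\operatorname{span}\{K,K^\perp\}$. Assuming $K^\perp\neq 0$, set $k=K/|K|$ and $k^\perp=K^\perp/|K^\perp|$; these are the orthonormal vectors named in the statement. Since $K=|K|\,k$ contributes only to the $k$-slot and $K^\perp=|K^\perp|\,k^\perp$ only to the $k^\perp$-slot, decomposing \eqref{horizontal} along $(k,k^\perp)$ converts the vector identities into four scalar identities: one for $x$, and a $k$-component and a $k^\perp$-component for each of $\ell$ and $y$.

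The key step is the choice of new parameters. I would retain $\rho$ and $\alpha$, set the reparameterized time $\tau:=|K|\,t$, and introduce $\sigma:=\rho\,|K^\perp|/|K|\geq 0$. The level constraint \eqref{level}, written in the form $|K|^2+\rho^2|K^\perp|^2=\rho^2$, then yields $1+\sigma^2=\rho^2/|K|^2$, i.e. $\sqrt{1+\sigma^2}=\rho/|K|$; hence $\tau=\rho t/\sqrt{1+\sigma^2}$, together with $|K|=\rho/\sqrt{1+\sigma^2}$ and $|K^\perp|=\sigma/\sqrt{1+\sigma^2}$, exactly as declared in the corollary. Substituting these into the four scalar identities reproduces \eqref{geodesics_polar} line by line; in particular the coefficient $|K^\perp|/(|K|\rho)$ occurring in the $k^\perp$-component of $y$ collapses to $\sigma/\rho^2$, which is precisely what \eqref{geodesics_polar} requires, and the prefactors of $x$ and of the $k$-slots reduce to $1/\rho$ and $1/(2\rho^2)$ with no rescaling of $\rho$ or $\alpha$.

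It remains to handle the boundary case $K^\perp=0$, which corresponds to $\sigma=0$: then $|K|=\rho$, the $k^\perp$-slots of $\ell$ and $y$ vanish, and $k^\perp$ may be taken to be any unit vector orthogonal to $k$, so \eqref{geodesics_polar} still holds. Finally I would confirm that the parameter ranges agree, $\rho>0$ and $\alpha\in S^1$ being inherited and $\sigma\geq 0$ sweeping out all of $[0,\infty)$ as the ratio $|K^\perp|/|K|$ does, so that \eqref{geodesics_polar} parameterizes the same family as \eqref{horizontal}. I do not anticipate a genuine obstacle: the only substantive computation is the constraint manipulation giving $\sqrt{1+\sigma^2}=\rho/|K|$, after which everything is routine trigonometric bookkeeping. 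The points to watch are the consistent treatment of the $\sigma=0$ degeneration and the fact that $\rho$ and $\alpha$ are preserved, not transformed, under the substitution.
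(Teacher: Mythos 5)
Your proposal is correct and follows essentially the same route as the paper: introducing $\sigma=\rho|K^\perp|/|K|$ and $\tau=|K|t$, using the level constraint \eqref{level} to get $|K|^2=\rho^2/(1+\sigma^2)$, and rewriting \eqref{horizontal} in the orthonormal basis $(k,k^\perp)$. The paper's proof is terser; your extra checks of the individual coefficients and of the degenerate case $K^\perp=0$ are consistent with it and fill in the same routine bookkeeping.
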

\begin{proof}
Let us  introduce a new parameter  $\sigma=\rho|K^\perp|/|K|\geq 0$ and a rescaled 'time'  $\tau=|K|t$. The equation \eqref{geodesics_polar} is obtained by substituting $|K^\perp|$ and $t$ in \eqref{horizontal} by these new parameters and by expressing curves $\ell(t),y(t)\in\R^n$ in the orthonormal basis formed by unite vectors $K/|K|$ and $K^\perp/|K^\perp|$.
The constrain  \eqref{level} reads $|K|^2=\rho^2/(1+\sigma^2)$ using the new parameters, hence the rescaled time equals $\tau=|K|t=\rho t/\sqrt{1+\sigma^2}$.
\end{proof}

\subsection{A characterization of geodesics}
It is easy to see from expression \eqref{geodesics_polar} that the projection of geodesic curves to the first layer $(x,\ell)$ is a helix.  The parameter $\alpha$ determines a shift of this helix from the origin, $\sigma$ gives its pitch, and $\rho$ is the dilatation. The remaining parameters of geodesics are the orthonormal vectors $k,k^\perp$ that span two-dimensional subspace of $\R^n$ in which the vectors $\ell(t), y(t)$ lie. More precisely, the dimension of this space is two for $\sigma\neq 0$ (that corresponds to $K^\perp\neq 0$ by definition) and one for $\sigma=0$. In such case,
 the whole geodesic lies in a three-dimensional space $\R\oplus \text{span}\{k\}\oplus \text{span}\{k\}$, and its equation \eqref{geodesics_polar} coincide with the well-known formula \eqref{geodesics_Heisenberg} for a geodesic on the Heisenberg group. Obviously, all points lying on such a geodesic satisfy $l(t)\wedge y(t)=0$ and thus the whole geodesic curve lies in the $SO(n)$-invariant set $G_1$, see \eqref{G1}. On the other hand, for $\sigma\neq 0$ we are going to show that the whole geodesic lies in $G_2,$ i.e., $l(t)\wedge y(t)\neq 0$ for all $t>0$.

\begin{lemma*} \label{lema}
	Let $q(t)=(x(t),\ell(t),y(t))$ be a geodesic on $G$ starting at the origin. Then  $\ell(t) \wedge y(t)\neq 0$ for all $t>0$ unless $\sigma=0$.
\end{lemma*}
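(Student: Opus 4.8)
The plan is to reduce the claim to a single scalar inequality in the rescaled time $\tau$ and then to establish that inequality by a derivative cascade. When $\sigma\neq 0$ both curves $\ell(t)$ and $y(t)$ lie in the genuinely two-dimensional plane spanned by the orthonormal pair $(k,k^\perp)$ of Corollary \ref{geodesics_cor}, so $\ell(t)\wedge y(t)\neq 0$ is equivalent to the non-vanishing of the $2\times 2$ determinant of their coordinates in that basis. First I would substitute the explicit expressions \eqref{geodesics_polar} into this determinant; a direct computation gives
\begin{align*}
\ell(t)\wedge y(t)=\frac{\sigma}{\rho^{3}}\,G(\tau),
\end{align*}
where $G(\tau)$ collects the trigonometric terms and $\sigma,\rho>0$. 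Hence everything reduces to showing $G(\tau)\neq 0$ for $\tau>0$.

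The second step is to separate the dependence on the shift parameter $\alpha$. Using the identities $\sin(\tau+\alpha)-\sin\alpha=2\sin(\tau/2)\cos(\tau/2+\alpha)$ and $\cos(\tau+\alpha)+\cos\alpha=2\cos(\tau/2)\cos(\tau/2+\alpha)$, all occurrences of $\alpha$ gather into the single factor $\cos^{2}(\tau/2+\alpha)$, so that
\begin{align*}
G(\tau)=P(\tau)\,\cos^{2}(\tau/2+\alpha)+Q(\tau),
\end{align*}
with $P,Q$ independent of $\alpha$. Since $\cos^{2}(\tau/2+\alpha)\in[0,1]$, the value $G(\tau)$ always lies between $Q(\tau)$ and $P(\tau)+Q(\tau)$, so it suffices to prove that both $Q(\tau)<0$ and $P(\tau)+Q(\tau)<0$ for every $\tau>0$; this yields $G<0$, hence $\ell\wedge y\neq 0$ for all admissible $\alpha$. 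The first inequality is immediate, since $Q(\tau)=\tfrac{\tau}{2}(\sin\tau-\tau)<0$ because $\sin\tau<\tau$.

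The core of the argument, and the step I expect to be the main obstacle, is the remaining inequality $P(\tau)+Q(\tau)<0$, which unwinds to
\begin{align*}
\phi(\tau):=\frac{\tau^{2}}{2}+\frac{\tau\sin\tau}{2}+2\cos\tau-2>0,\qquad \tau>0.
\end{align*}
The helpful structural fact is that $\phi(0)=\phi'(0)=\phi''(0)=\phi'''(0)=0$ while $\phi''''(\tau)=\tfrac12\,\tau\sin\tau$. On $[0,\pi]$ the fourth derivative is nonnegative, so successive integration from the vanishing initial data gives first $\phi'''\ge 0$ and then $\phi''\ge 0$ there. On $[\pi,2\pi]$, where $\phi''''\le 0$, the derivative $\phi'''$ is nonincreasing with $\phi'''(\pi)=\pi/2>0$, so $\phi''$ is first nondecreasing then nonincreasing; since its endpoint values $\phi''(\pi)=2$ and $\phi''(2\pi)=0$ are nonnegative, $\phi''\ge 0$ on $[\pi,2\pi]$ as well. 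Thus $\phi''\ge 0$ on all of $[0,2\pi]$, which forces $\phi'\ge 0$ and then $\phi>0$ on $(0,2\pi]$. For $\tau\ge 2\pi$ the crude bound $\phi'(\tau)=\tau(1+\tfrac12\cos\tau)-\tfrac32\sin\tau\ge\tfrac{\tau}{2}-\tfrac32>0$ shows that $\phi$ keeps increasing and therefore remains positive.

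The delicate point is precisely this scalar estimate. The naive bound $\phi(\tau)\ge\tfrac{\tau^{2}}{2}-\tfrac{\tau}{2}-4$ already fails on the interval $(\pi,2\pi)$, so one genuinely needs the sign pattern of $\phi''''$ and the monotonicity of $\phi'''$ to control $\phi''$ across $\tau=\pi$; everything else is routine. Once $\phi>0$ is secured, the two sign conditions give $G(\tau)<0$ for all $\tau>0$, which proves $\ell(t)\wedge y(t)\neq 0$ whenever $\sigma\neq 0$.
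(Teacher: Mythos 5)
Your proof is correct and follows essentially the same route as the paper: the determinant factors as $\tfrac{\sigma}{\rho^{3}}G(\tau)$ with $G$ affine in $\cos^{2}(\tau/2+\alpha)\in[0,1]$, and your two endpoint functions $Q$ and $P+Q$ are, up to the substitution $\tau\mapsto 2\tau$ and an overall factor $-2$, exactly the paper's $f_0$ and $f_1$. The only divergence is at the final scalar inequality, where the paper merely refers to Taylor-estimate techniques from the references for the positivity of $f_1$, whereas your derivative cascade through $\phi''''(\tau)=\tfrac12\tau\sin\tau$ gives a clean, self-contained proof of the equivalent bound $\phi>0$.
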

\begin{proof}
	We show that the determinant formed by vectors $\ell(t),y(t)\in\R^2$ is strictly negative for all $\tau>0.$
	First, we make a next simplification of the geodesic equations \eqref{geodesics_polar} by using trigonometric formulas that convert sums to products. Namely, using a new 'time' $\tau$ for $\tau/2$, we get the following equations for geodesics 
	\begin{align} \label{geodesics_greek}
\begin{split}
x(t)&=\frac{2}{\rho}  \sin \tau  \sin \! \left(\tau +\alpha \right),
\\
\ell(t)& =\frac{2}{\rho}\begin{pmatrix}
\sin \tau  \cos \! \left(\tau +\alpha \right) 
\\
\tau  \sigma  
\end{pmatrix},
\\
y(t)&=
\frac{1}{\rho^2}\begin{pmatrix}
\tau -\cos\tau  \sin\tau 
\\
2\sigma  \cos\! \left(\tau+\alpha \right) \left(\sin\tau-\tau  \cos \tau\right) 
\end{pmatrix},
\end{split} 
\end{align}
	where $\tau=\rho t/2\sqrt{1+\sigma^2}.$ Now we compute
	$ 
	\operatorname{det}(\ell,y)=-\frac{2\sigma}{\rho^3} f(\tau,\alpha),
	$
	where we put
	\begin{align} \label{f_det}
	f(\tau,\alpha)=\tau^2-\tau\sin\tau\cos\tau-2\sin\tau\left(\sin\tau-\tau\cos\tau\right)\cos^2(\tau+\alpha).
	\end{align}
	Since $\rho,\sigma>0$, it is sufficient to prove that the function $f$ of two variables given by \eqref{f_det} is strictly positive for all $\tau>0$.
	This assertion can be proved by using inequality  $0\leq\cos^2(\tau+\alpha)\leq 1$. Indeed, substituting the lower and the upper bound in $f$ we see that its values  lie  between values of functions of one variable $f_0$ and $f_1$, where 
	\begin{align*}
	f_0(\tau)&=\tau(\tau-\sin\tau\cos\tau), \\
	f_1(\tau)&=\tau^2+\tau\sin\tau\cos\tau-2\sin^2\tau,
	\end{align*} 
	and both functions vanish for $\tau=0$ and are strictly positive for all $\tau>0.$
	The positivity of $f_0$ follows from the obvious inequality $\tau>\sin\tau\cos\tau$. The positivity of $f_1$ can be proved, for example,  by a combination of a 'local' estimate of goniometric functions by their truncated Taylor series in $\tau=0$ and their 'global' estimate  $-1<\sin\tau,\cos\tau<1$, as in \cite{hz}, or by a similar technique as in \cite{talianky}. %Concretely,  we get 
%	\begin{align*}
%	f_1(\tau)&>\frac{2}{315}\tau^6(7-\tau^2)>0 \quad\text{ for } \tau\in (0,\sqrt{7}),\\
%	f_1(\tau)&>\tau^2-\tau-2>0 \quad\text{ for } \tau\in (2,\infty).
%	\end{align*} 
\end{proof}

This lemma, together with the facts stated above, allows us to characterize the sub-Riemannian geodesics on $G$ as follows.
Recall that the group splits into disjoint sets $G=\id\sqcup G_1\sqcup G_2$ as described in Proposition \ref{factor_spaces}. 
\begin{proposition} \label{characterization}
	 The sub--Riemannian geodesics on path group $G$ issuing from the identity may be of the following three types.
	 \begin{enumerate}
	 	\item  Lines parameterized by unite speed in subspace $\{y=0\}$.
	 	\item  Heisenberg geodesic lying in a three-dimensional subspace of $G_1$. They are parameterized by $\rho>0,\alpha\in S^1$ and by a unite vector $k\in\R^n$ that determines the three-dimensional subspace $\R\oplus \text{span}\{k\}\oplus \text{span}\{k\}$.
	 	\item Curves in a five-dimensional subspace of $G_2$ given by \eqref{geodesics_polar} or equivalently by \eqref{geodesics_greek}. They are parameterized by $\rho,\sigma>0$, $\alpha\in S^1,$  and by two unite orthogonal vectors $k,k^\perp\in\R^n$ that define the five-dimensional subspace $\R\oplus \text{span}\{k,k^\perp\}\oplus \text{span}\{k,k^\perp\}$.
	 \end{enumerate}
\end{proposition}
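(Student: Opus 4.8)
The plan is to assemble the explicit list of geodesics from Proposition \ref{geo} with the $SO(n)$-invariant decomposition $G = \id \sqcup G_1 \sqcup G_2$ of Proposition \ref{factor_spaces}, using Lemma \ref{lema} to determine precisely which stratum each non-line geodesic occupies. No new computation is needed; the statement is essentially a synthesis of the facts already established.

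First I would invoke Proposition \ref{geo}, which exhausts all geodesics issuing from the identity: each is either a line \eqref{line} or a curve of the form \eqref{horizontal}. The lines have $y(t) \equiv 0$, so they lie entirely in the subspace $\{y = 0\}$, and the constraint $C_0^2 + |C|^2 = 1$ is exactly the unit-speed condition for the metric \eqref{SR_metric}; these constitute type (1).

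Next, for the curves \eqref{horizontal}, I would read off directly from the formulas that both $\ell(t)$ and $y(t)$ are fixed linear combinations of the orthogonal vectors $K$ and $K^\perp$. Hence the whole curve stays in the coordinate subspace $\R \oplus \operatorname{span}\{K, K^\perp\} \oplus \operatorname{span}\{K, K^\perp\}$, and after passing to the orthonormal frame $(k, k^\perp)$ of Corollary \ref{geodesics_cor} the equations reduce to the form \eqref{geodesics_polar}.

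Finally I would split according to $\sigma \geq 0$. When $\sigma = 0$ (equivalently $K^\perp = 0$) both $\ell(t)$ and $y(t)$ lie in $\operatorname{span}\{k\}$, so the curve drops into the three-dimensional subspace $\R \oplus \operatorname{span}\{k\} \oplus \operatorname{span}\{k\}$; comparing \eqref{geodesics_polar} with \eqref{geodesics_Heisenberg} identifies it as a Heisenberg geodesic, and since $\ell(t) \wedge y(t) = 0$ it lies in $G_1$ for every $t > 0$, giving type (2) with parameters $\rho, \alpha$ and the unit vector $k$. When $\sigma > 0$, Lemma \ref{lema} yields $\ell(t) \wedge y(t) \neq 0$ for all $t > 0$, so $\dim \operatorname{span}\{\ell(t), y(t)\} = 2$ and the curve remains in $G_2$ throughout, giving type (3) with parameters $\rho, \sigma, \alpha$ and the orthonormal pair $k, k^\perp$. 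I expect the only genuinely delicate point to be this last membership claim: a priori the wedge product could vanish at isolated instants and momentarily push the curve into $G_1$, and excluding this is exactly what the strict positivity of the function $f$ in \eqref{f_det} established in Lemma \ref{lema} accomplishes. Everything else is a direct reading of the explicit formulas.
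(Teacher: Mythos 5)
Your proposal is correct and follows essentially the same route as the paper: the proposition is obtained there as a direct synthesis of the explicit formulas of Proposition \ref{geo} (rewritten in the frame $(k,k^\perp)$ as in Corollary \ref{geodesics_cor}) with the stratification of Proposition \ref{factor_spaces}, the only nontrivial ingredient being Lemma \ref{lema}, which rules out the wedge $\ell(t)\wedge y(t)$ vanishing at any $t>0$ when $\sigma\neq 0$. You correctly identify that lemma as the one genuinely delicate point; everything else is, as you say, a direct reading of the formulas.
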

\begin{remark}
	The equivalence between geodesics in $G_1$ and the Heisenberg geodesics can be seen immediately from the description of the path geometry structure in section 1. Indeed, the push forward of left-invariant frame \eqref{fields_basis} on $G_1$ along the factorization map $G_1\to \R^3$, defined by $(x,\ell e_1,y e_1)\mapsto (x,\ell,y)$, reads
	\begin{align*}
	\begin{split}
	X_0^*=\partial_{x}-\frac{\ell}{2}{\partial_{ y}},\quad
	X_i^*=\partial_{ \ell}+\frac{x}{2} \partial_{ y},\quad
	Y_i^*=\partial_{y}
	\label{vfH}
	\end{split}
	\end{align*}
	which is the standard form of generators of Heisenberg Lie algebra. 
	Hence, the group law \eqref{grupa} on $G_1$ gives an isomorphism  $G_1/SO(n) \cong \mathbb H_3$.
\end{remark}
A similar description holds for geodesics between arbitrary two points $q_0,q_1\in G.$ Concretely, let us consider a geodesic between points $q_0=(x_0,\ell_0,y_0)$ and $q_1=(x_1,\ell_1,y_1)$. Using the right translation by $q_0^{-1}$, we get a geodesic from the origin to the point 
	\begin{align*}
	\tilde{q}=q_1q_0^{-1}=\begin{pmatrix}
	x_1-x_0 \\ \ell_1-\ell_0 \\ y_1-y_0 +\frac12(x_1\ell_0-\ell_1 x_0)
	\end{pmatrix},
	\end{align*}
	where we used the formula \eqref{grupa} for group multiplication. For this geodesic, the characterization given in Proposition \ref{characterization} holds. Hence, each geodesic between points $q_0,q_1$ is one of the three types listed in this proposition translated by right multiplication by $q_0$.

\section{Optimality of geodesics} \label{sec4}

Our optimality analysis of sub-Riemannian geodesics on Carnot group $G$ with the path geometry structure is based on explicit formulas for geodesics obtained in section \ref{geodesics_section} and symmetries of the structure described in section \ref{path_section}. In particular, the right invariant vector fields allow us to restrict the analysis to geodesics starting in the origin which we characterized above. From the three types given in Proposition \ref{characterization} the lines are obviously always optimal while the geodesics in $G_1$ are equivalent to Heisenberg geodesics, and their cut locus and cut time are well known. Hence, it remains to study the optimality of geodesics in $G_2$. 

Note that this case is quite different from the Heisenberg case or the case of the $(3,6)$ free Carnot group since the $SO(n)$ symmetry does not produce any Maxwell point. Indeed, suppose that an end-point $q$ of a geodesics $q(t)=(x(t),\ell(t),y(t))$ in time $t=T$ is a fixed point for such a symmetry. Then, the symmetry acts trivially on the subspace generated by vectors $\ell(T),y(T)\in\R^n$ (whose dimension is either one or two). 
According to the above characterization of geodesics, we know that vectors $\ell(t),y(t)$ lie in this subspace for all times, and thus the whole geodesics lies in the fixed points of the given symmetry. In other words, the symmetry does not produce different geodesics with end-point $q$.
  On the other hand, it means that we may use these symmetries to reduce the problem of the optimality of geodesics to the problem of optimality of their projections to factor space $G_2/SO(n)$.

\subsection{Factorization of the exponential map}
The equations for sub-Riemannian geodesics define the exponential map
$\exp: \mathbb{R}^+\times \Lambda_0\to G$, where $\Lambda_0=H^{-1}(1/2)\subset T^*_0G$ is the initial cylinder of co-velocities in the cotangent space at the origin.  Recall that $T^*_0G$ was originally coordinated by $(h_0,h_i,w_i)$. Then we used coordinates $(\alpha,\rho,K_i,K^\perp_i)$, and finally, in Corollary \ref{geodesics_cor}, we defined parameter $\sigma=\rho|K^\perp|/|K|$ and then we wrote the equations for geodesics in an orthonormal basis $k=K/|K|$ and $k^\perp=K^\perp/|K^\perp|$. Moreover,  we substituted $|K|$ from the level set condition $(1+\sigma^2)/\rho^2=1/|K|^2$ and thus we obtained the parameterization  of cylinder $\Lambda_0$ by $(\alpha,\rho,\sigma,k_i,k^\perp_i)$, where  $\alpha\in S^1,$ $\rho>0,$ $\sigma\geq 0$ and $(k_i,k_i^\perp)$ lies in the Stiefel manifold of 2-frames in $\R^n$. It is easy to see that these coordinates are related to the  original coordinates on the cotangent space as follows
\begin{align*}
h_0=\frac{\sin\alpha}{\sqrt{1+\sigma^2}}, \quad h_i=k_i\frac{\cos\alpha}{\sqrt{1+\sigma^2}}+k_i^\perp\frac{\sigma}{\sqrt{1+\sigma^2}}, \quad w_i=k_i\frac{\rho}{\sqrt{1+\sigma^2}}.
\end{align*}
Indeed, we can verify that $h_0^2+h_1^2+\cdots+h_n^2=1.$ 
An explicit formula for the exponential map in these coordinates of $\Lambda_0$ reads
\begin{align}
\exp:\:  (t,\rho,\sigma,\alpha,k_i,k_i^\perp)\mapsto (x(t),\ell_1(t)k_i+\ell_2(t)k_i^\perp,y_1(t)k_i+y_2(t) k_i^\perp),
\end{align}
where  $x(t)\in \R$ and $\ell(t),y(t)\in\R^2$ are given by equation \eqref{geodesics_polar} or equivalently by \eqref{geodesics_greek}.
% and where $V_2(\R^n)\cong SO(n)/SO(n-2)$ is the Stiefel manifold of 2-frames in $\R^n$. 
  Note that, considering $\ell(t),y(t)$ as curves in $\R^2$, we have already used the symmetry given by the choice of plane $\operatorname{span}\{k_i,k_i^\perp\}$. The factorization over the $SO(n)$ action now fixes the last freedom given by the choice of orthonormal basis in this plane. 
   In other words, we can always set $k_i$ to $e_1$ and $k_i^\perp$ to $e_2$ and thus $(\rho,\sigma,\alpha)$ form a set of coordinates of factorized cylinder $\widehat{\Lambda}_0:=\Lambda_0/SO(n).$ The invariants that form coordinates of the factor space $\widehat{G}:=G/SO(n)$  has been discussed above, see Proposition \ref{factor_spaces}.
   \begin{proposition} \label{factorized_exp}
      Let  $\widehat{\Lambda}_0=\Lambda_0/SO(n)$ with coordinates $(\rho,\sigma,\alpha)$ as described above, i.e. $\rho>0,\sigma\geq 0$ and $\alpha\in S^1.$ Then, the factorized exponential map is given by
           \begin{align*}
   \widehat{\exp}:\: \mathbb{R}^+\times\widehat{\Lambda}_0\to\widehat{G} \quad (t,\rho,\sigma,\alpha)\mapsto (x,\ell^2, y\cdot \ell,y\wedge \ell),
   % \exp/SO(n):\: \R^+\times\R^+\times\R^+\times S^1\to G_2/SO(n).
   \end{align*}
   where    
   \begin{align} \label{invariants}
   \begin{split} 
   x &=\frac{2}{\rho}  \sin \tau  \sin \! \left(\tau +\alpha \right), \\
   \ell^2 &=\frac{4}{\rho^2} \left(\sin^{2}\tau \cos^{2}\!\left(\tau+\alpha \right)+ \tau^{2} \sigma^{2}\right)
   , \\
   y\cdot \ell &=\frac{2}{\rho^3}\cos \! \left(\tau+\alpha \right)\left( \sin \tau  \left(\tau -\cos \tau  \sin \tau\right)+ 2\tau \sigma^{2}  \left(\sin\tau-\tau  \cos \tau \right)\right),\\
   |y\wedge\ell|& = \frac{2}{\rho^3}\sigma\left(  \tau  \left(\tau -\cos \tau  \sin \tau\right)- 2\sin\tau\cos^2 \! \left(\tau+\alpha \right)  \left(\sin\tau-\tau  \cos \tau \right)\right),
   \end{split}
   \end{align}
   where $\tau=t\rho/2\sqrt{1+\sigma^2}$. Alternatively, using the square of the length of vector $y$ as a coordinate of $\widehat{\Lambda}_0$, we get
   \begin{align} \label{invariants2}
   y^2=\frac{1}{\rho^4} \left(\left(\tau -\cos\tau \sin\tau\right)^{2}+ 4\sigma^{2} \cos^{2}\!\left(\tau +\alpha \right) \left(\sin\tau-\tau  \cos\tau\right)^{2}\right).
   \end{align}
   \end{proposition}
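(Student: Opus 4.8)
The statement is a direct computation: all four invariants coordinating $\widehat G = G/SO(n)$ are obtained by substituting the explicit geodesic equations into their definitions. The plan is to start from the simplified form \eqref{geodesics_greek} of the geodesic equations, since it already uses the rescaled time $\tau = t\rho/2\sqrt{1+\sigma^2}$ appearing in the statement. Because the $SO(n)$ symmetry has been used to fix the orthonormal frame $(k,k^\perp)$ to the standard pair $(e_1,e_2)$, the curves $\ell(t),y(t)$ in \eqref{geodesics_greek} are genuine vectors in $\R^2$, with components $\ell_1=\tfrac{2}{\rho}\sin\tau\cos(\tau+\alpha)$, $\ell_2=\tfrac{2}{\rho}\tau\sigma$, $y_1=\tfrac{1}{\rho^2}(\tau-\sin\tau\cos\tau)$ and $y_2=\tfrac{2\sigma}{\rho^2}\cos(\tau+\alpha)(\sin\tau-\tau\cos\tau)$. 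Each invariant is then a quadratic expression in these four components.

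I would compute them in turn. The first coordinate $x$ is read off verbatim from \eqref{geodesics_greek}. The squared length $\ell^2=\ell_1^2+\ell_2^2$, the scalar product $y\cdot\ell=y_1\ell_1+y_2\ell_2$, and the squared length $y^2=y_1^2+y_2^2$ used in the alternative parameterization \eqref{invariants2} all follow by squaring, adding, and collecting the common factors of $1/\rho^2$, $1/\rho^3$, $1/\rho^4$ respectively. The wedge product is the only invariant for which I would reuse an earlier result: in $\R^2$ one has $|y\wedge\ell|=|\operatorname{det}(\ell,y)|$, and this determinant was already evaluated in the proof of Lemma \ref{lema}, where $\operatorname{det}(\ell,y)=-\tfrac{2\sigma}{\rho^3}f(\tau,\alpha)$ with $f$ as in \eqref{f_det}. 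Since that lemma also shows $f(\tau,\alpha)>0$ for $\tau>0$, passing to the absolute value reproduces exactly the expression claimed for $|y\wedge\ell|$ in \eqref{invariants}.

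There is no genuine obstacle here beyond the trigonometric bookkeeping, and in contrast to Lemma \ref{lema} no inequality is required, since every identity is exact. The one point worth flagging is the treatment of the wedge: the signed quantity $\ell\wedge y$ is not a well-defined function on the quotient, since a rotation in $SO(n)$ preserving the plane $\operatorname{span}\{\ell,y\}$ may reverse its induced orientation; only the magnitude $|y\wedge\ell|=|\ell|\,|y|\sin\varphi$ descends to $\widehat G$, in agreement with the parameterization by the angle $\varphi\in(0,\pi)$ in Proposition \ref{factor_spaces}. This is why the formula records $|y\wedge\ell|$, and its non-negativity is guaranteed automatically by the positivity of $f$.
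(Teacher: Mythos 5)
Your proposal is correct and follows exactly the paper's route: the paper's proof is the single line ``the result is obtained using equation \eqref{geodesics_greek}'', and your component-wise substitution, together with reusing $\operatorname{det}(\ell,y)=-\tfrac{2\sigma}{\rho^3}f(\tau,\alpha)$ from Lemma \ref{lema} for the wedge invariant, is precisely the computation being invoked. Your remark on why only $|y\wedge\ell|$ (not the signed wedge) descends to the quotient is a sensible clarification consistent with Proposition \ref{factor_spaces}.
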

   \begin{proof}
      The result is obtained using equation \eqref{geodesics_greek} for geodesics.
   \end{proof}
  
%  Let us also recall the decomposition $G=G_0 \sqcup G_1 \sqcup G_2$ of the path group under the action of $SO(n)$ and the characterization of geodesics given by Proposition \ref{characterization}. 
\begin{remark}
  The geodesics in $G_1$ correspond exactly to the limiting case $\sigma=0.$ Indeed, it is easy to see that, in such a case,  the formula for  $\widehat{\exp}:(t,\rho,\alpha)\mapsto (x,|\ell|,|y|)$ coincides with the formula \eqref{geodesics_Heisenberg} for Heisenberg geodesics. The other invariants satisfy $y\wedge \ell=0$ and $y\cdot\ell=|y||\ell|$ in such a case.
  %If $\sigma\neq 0$, the geodesics lie in $G_2$, and  thus the image of $\widehat{\exp}$ lies in $\widehat{G}_2:=G_2/SO(n)$.  
\end{remark}

\subsection{Description of the cut locus and the cut time}
\label{cut_locus_description}
Let us discuss individually the types of geodesics given in Proposition \ref{characterization}. Obviously, the lines are optimal for all times. Next, the geodesics in $G_1$ are preimages of the Heisenberg geodesic in the factorization map and since the action of $SO(n)$ is a symmetry with respect to the sub-Riemannian metric (and thus the symmetry of the exponential map), they have the same length and they lose their optimality at the same time as in the Heisenberg case $\mathbb{H}_3$, see section \ref{Heisenberg}. It means that the geodesics in $G_1$ starting at the origin lose their optimality when they meet the vertical set $(0,0,y)\in\R\oplus\R^n\oplus\R^n $. These points are both Maxwell points and conjugate points, and the cut time is equal to
\begin{align} \label{cut_time_G1}
t_{cut} = \sqrt{4\pi|y|} = \frac{2\pi}{\rho},
\end{align}
where the second equality holds for our parameterization of geodesics as in \eqref{geodesics_polar} or \eqref{geodesics_greek}.
Indeed, substituting $\sigma=0$ and   $\tau_{cut}=\pi$ into these equations we get that the end-point of any geodesics in $G_1$ in such critical time has coordinates $(0,0,\pi k_i/\rho^2),$ where $k_i$ is an arbitrary unite vector in $\R^n$ and $\rho>0$. Changing parameter $\alpha$ we get an infinite number of geodesics of the same length from the origin to this point.
%, hence $t_{cut}$ is Maxwell time, and $(0,0,y)$ is the Maxwell point for each $y\in\R^n$. Due to the well-known optimal synthesis of the Heisenberg case, we know that it is the first Maxwell time and also the first conjugate time. 

Now let us consider the geodesics in $G_2$, i.e., the geodesics with $\sigma\neq 0$, and let us look at their end-points in the same critical re-scaled time $\tau=\pi$. Substitution into formulas \eqref{invariants} and \eqref{invariants2} for the factorized exponential map gives
 \begin{align} \label{cut_locus_parametric}
\begin{split} 
x &=0, \\
|\ell| &=\frac{2\pi \sigma}{\rho} 
, \\
y\cdot \ell &=\frac{4\pi^2 \sigma^{2} }{\rho^3}\cos \! \left(\pi+\alpha \right),\\
|y\wedge\ell|& = \frac{2\pi^2\sigma}{\rho^3}, \\
|y|&=\frac{\pi}{\rho^2} \sqrt{1+ 4\sigma^{2} \cos^{2}\!\left(\pi +\alpha \right)}.
\end{split}
\end{align}
We observe that, in contrast to the Heisenberg case, the end-points depend on the parameter $\alpha$. It corresponds to the fact that the action of $SO(n)$ on $G$ does not produce Maxwell points, as mentioned earlier. Nevertheless, there is still one discrete symmetry that fixes these points, namely
\begin{align}
\alpha\mapsto -\alpha.
\end{align}
Indeed, $\cos \! \left(\pi+\alpha \right)=\cos \! \left(\pi-\alpha \right)$ and thus this symmetry acts trivially on the invariants \eqref{cut_locus_parametric} however it obviously does not act trivially on the projected geodesics \eqref{invariants}. In other words, we have found two different geodesics of the same length that meet at the same point. Hence, $\tau=\pi$ is the Maxwell re-scaled time also for geodesics in $G_2.$  This brings us to a conjecture that $\tau_{cut}=\pi$ for all geodesics. 
Indeed, we will prove this conjecture by the extended Hadamard technique in the next section. Thus, a crucial statement for the optimal synthesis on the path group reads as follows. Let us recall the relation $\tau=t\rho/2\sqrt{1+\sigma^2}$ between the original time and the rescaled time.
\begin{proposition} \label{cut_time_prop}
	The cut time of sub-Riemannian geodesics on the path group $G$ parametrized as  in Corollary \ref{geodesics_cor} by  $\theta=(\alpha,\rho,\sigma,k,k^\perp)\in\Lambda_0$  is equal to
\begin{align} \label{cut_time}
t_{cut}(\theta)=\frac{2\pi}{\rho}\sqrt{1+\sigma^2}.
\end{align}
For $\sigma=0$, the time $t_{cut}$ is also the first conjugate time. For $\sigma > 0$, it coincides with the first conjugate time only for $\alpha=0$ and $\alpha=\pi$. 
\end{proposition}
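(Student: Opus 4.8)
The plan is to confirm the conjecture $\tau_{cut}=\pi$ announced above, and thereby to establish \eqref{cut_time} together with the separate statement on conjugate times. Lines and $G_1$-geodesics are already fully understood, so the entire content concerns geodesics in $G_2$, i.e. the case $\sigma>0$. The discrete symmetry $\alpha\mapsto-\alpha$ has already produced, at $\tau=\pi$, two distinct geodesics sharing one endpoint; this gives at once the \emph{upper} bound $t_{cut}\le\frac{2\pi}{\rho}\sqrt{1+\sigma^2}$. The substantial remaining task is the matching lower bound: every geodesic segment with $0<\tau<\pi$ is globally optimal. Since, as noted above, the $SO(n)$-action creates no Maxwell points in $G_2$, optimality of a geodesic is equivalent to optimality of its projection to the factor space, and it is on this projection that I would deploy the extended Hadamard technique of \cite[Section 13.4]{ABB}.

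Concretely, I would apply the technique to the factorized exponential map $\widehat{\exp}$ of Proposition \ref{factorized_exp}, using first the crucial dimension match: its domain $\mathbb{R}^+\times\widehat{\Lambda}_0$, coordinatized by $(t,\rho,\sigma,\alpha)$, and the four-dimensional component $\widehat{G}_2=G_2/SO(n)$ of the target, coordinatized by $(x,\ell^2,y\cdot\ell,y\wedge\ell)$, are both four-dimensional. I would restrict $\widehat{\exp}$ to the open region $\mathcal{D}=\{\,0<\tau<\pi,\ \sigma>0,\ \rho>0,\ \alpha\in S^1\,\}$ and verify the two hypotheses: (i) \emph{properness}, read off from the explicit invariants \eqref{invariants} and \eqref{invariants2} by controlling the endpoint as the parameters tend to the boundary of $\mathcal{D}$ or to infinity; and (ii) \emph{non-degeneracy of the differential}, i.e. the Jacobian of $\widehat{\exp}$ in $(t,\rho,\sigma,\alpha)$ is nowhere zero on $\mathcal{D}$. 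The theorem then forces the restricted map to be a diffeomorphism onto its open image, so that each projected geodesic with $\tau<\pi$ is the unique minimizer to its endpoint, hence globally optimal. Together with the Maxwell point at $\tau=\pi$, this pins down $\tau_{cut}=\pi$, which in the original time is exactly \eqref{cut_time}.

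The conjugate-time claim then follows with comparatively little extra work. Because the reduction to the factor space shows that each geodesic is optimal for $\tau<\pi$, no conjugate point can occur before $\tau=\pi$, so the first conjugate time already satisfies $t_{conj}\ge t_{cut}$. It therefore remains only to decide whether the cut time itself is a conjugate time, i.e. whether the \emph{full} exponential map $\exp:\mathbb{R}\times\Lambda_0\to G$ between spaces of dimension $2n+1$ is singular at $\tau=\pi$. Using the $SO(n)$-symmetry to block-diagonalize its Jacobian into an essential part in $(t,\rho,\sigma,\alpha)$ and a transverse part coming from the Stiefel directions $(k,k^\perp)$, I would evaluate this determinant at $\tau=\pi$. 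Guided by the parametric form \eqref{cut_locus_parametric} of the endpoints, I expect it to vanish there exactly when $\sigma=0$, or when $\sigma>0$ and $\alpha\in\{0,\pi\}$; in those cases $t_{conj}=t_{cut}$, while otherwise $t_{conj}>t_{cut}$.

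The main obstacle I anticipate is step (ii): proving that the Jacobian of $\widehat{\exp}$ does not vanish for $0<\tau<\pi$. As with Lemma \ref{lema}, this should reduce to a trigonometric positivity statement in the variables $(\tau,\sigma,\alpha)$, and I expect it to demand the same style of analysis, namely sharp estimates of the relevant goniometric combination, for instance by bounding $\cos^2(\tau+\alpha)\in[0,1]$ and comparing against auxiliary one-variable functions, supplemented near $\tau=0$ by truncated Taylor expansions. The conjugate-time computation carries a parallel, though milder, difficulty: isolating the behaviour of a transcendental determinant precisely at $\tau=\pi$ and tracking its dependence on $\alpha$, with the degenerate values $\alpha=0$ and $\alpha=\pi$ — where the conjugate point coincides with the cut point — requiring separate treatment.
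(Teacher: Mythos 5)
Your outline follows the paper's strategy up to a point -- the Maxwell pair from $\alpha\mapsto-\alpha$ for the upper bound, the reduction to the factorized map $\widehat{\exp}$ on the four-dimensional quotient, and the two Hadamard hypotheses (properness and non-vanishing Jacobian) are all exactly what the paper does. But there is a genuine gap at the decisive step: from ``proper with nowhere-singular differential'' between connected manifolds of equal dimension you may conclude only that $\widehat{\exp}|_{N_1}$ is a \emph{covering}, not a diffeomorphism. Your sentence ``the theorem then forces the restricted map to be a diffeomorphism onto its open image'' is where the argument breaks. The target $N_2=\widehat{G}_2\setminus\widehat{\operatorname{Cut}}_0$ is not simply connected -- the paper shows $\pi_1(N_2)\cong\mathbb{Z}$, since $N_2$ is the complement of a codimension-one ``wall'' in the quotient -- so a priori the covering could have any degree, and a degree $d>1$ would mean $d$ distinct geodesics with $\tau<\pi$ reaching the same point, i.e.\ Maxwell points strictly before $\tau=\pi$, which would destroy the claimed cut time. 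The paper closes this with a separate step: it computes $\pi_1(N_1)\cong\pi_1(N_2)\cong\mathbb{Z}$ and checks that the image under $\widehat{\exp}$ of a generating loop of $N_1$ (e.g.\ $t\mapsto(\pi/2,2\pi t,1,1)$) winds exactly once around the cut locus, by counting its transversal intersections with the two components of $\{x=0\}\setminus\widehat{\operatorname{Cut}}_0$. Without this degree-one argument the proof is incomplete.

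A second, smaller omission: you defer to ``its open image'' rather than identifying the image. The paper's Step 1 proves the two inclusions $N_2\supseteq\widehat{G}_2\setminus\widehat{\operatorname{Cut}}_0$ and $N_2\subseteq\widehat{G}_2\setminus\widehat{\operatorname{Cut}}_0$ via nontrivial trigonometric estimates (sign analysis of $F_{y^2}$ at $\tau=0$ and $\tau=\pi$, and positivity of $f_\wedge(\tau)=\tau^2+\tau\sin\tau\cos\tau-2\sin^2\tau$ on $(0,\pi)$); this identification is needed both to set up the fundamental-group computation and to conclude that every point off the cut locus is reached by a unique geodesic before time $\pi$, hence optimally. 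Your treatment of the conjugate time is essentially right in spirit -- the paper reads it off from the factorized Jacobian $J_4=\tfrac{256\sigma}{\rho^9}f(\tau,\alpha)\bigl(A(\tau,\alpha)\sigma^2+B(\tau,\alpha)\bigr)$, which vanishes at $\tau=\pi$ exactly when $\alpha\in\{0,\pi\}$ -- but it, too, depends on the explicit positivity estimates for $A$ and $B$ that you only anticipate rather than supply.
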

A proof of this proposition is given below, see Section \ref{proof}.
The substitution of this cut time into equations for geodesics immediately gives a parametric description of the cut locus. An explicit description of the cut locus and cut time is given in the following theorem, which is considered as the main result of this paper.	
	\begin{theorem} \label{thm}
	Let $G=\R\oplus\R^n\oplus\R^n$ be a Carnot group with the path geometry structure.
	The cut locus $\operatorname{Cut}_0$ of sub-Riemannian geodesics issuing from the identity is formed by points $(0,0,y)\in G$ and points $(0,\ell,y)\in G$ such that $\ell\wedge y\neq 0$ and 
	\begin{align} \label{cut_locus_condition}
	\pi |y|\cos^2\varphi \leq |\ell|^2\sin\varphi, 
	\end{align}
	where $\varphi$ is the angle between vectors $\ell,y\in\R^n$ and where $|\cdot|$ denotes the euclidean norm in $\R^n$.
	The corresponding cut time is given by
	\begin{align} \label{cut_time_cases}
	t_{cut}=\begin{cases}
	 \sqrt{4\pi|y|} & \text{ if } \; \ell\wedge y=0, \\
	 \sqrt{4\pi|y|\sin\varphi+|\ell|^2} & \text{ if } \; \ell\wedge y\neq 0.
	 \end{cases}
	\end{align}
	The points $(0,0,y)$ are also conjugate points, and they are reached by infinitely many optimal geodesics. The points $(0,\ell,y)$ in the cut locus are reached by two optimal geodesics, and they are also conjugate points if and only if  \eqref{cut_locus_condition} becomes equality. 
\end{theorem}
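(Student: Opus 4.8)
The plan is to convert the cut time of Proposition \ref{cut_time_prop}, which is expressed through the co-vector parameters $(\alpha,\rho,\sigma)$, into the intrinsic invariants $(|\ell|,|y|,\varphi)$ of the cut point, and then to extract both the region \eqref{cut_locus_condition} and the cut time \eqref{cut_time_cases} by eliminating the parameters. The single nontrivial input is Proposition \ref{cut_time_prop}; everything else is a change of variables together with a check that the resulting parameterization is onto the claimed set.

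First I would note that the relation $\tau=t\rho/2\sqrt{1+\sigma^2}$ together with $t_{cut}=\frac{2\pi}{\rho}\sqrt{1+\sigma^2}$ yields the rescaled cut time $\tau_{cut}=\pi$ for every geodesic. This is exactly the substitution already carried out to obtain the parametric form \eqref{cut_locus_parametric} of the cut point, which in particular shows $x=0$ at every cut point. The limiting case $\sigma=0$ reproduces the Heisenberg points $(0,0,y)$ with cut time $\sqrt{4\pi|y|}$ from \eqref{cut_time_G1}, so the remaining analysis concerns $\sigma>0$, where $|\ell|=2\pi\sigma/\rho>0$ and $|y\wedge\ell|=2\pi^2\sigma/\rho^3>0$ force $\ell\wedge y\neq 0$.

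For $\sigma>0$ I would set $c=\cos(\pi+\alpha)=-\cos\alpha$ and compute the angle from \eqref{cut_locus_parametric}, obtaining $\sin\varphi=(1+4\sigma^2c^2)^{-1/2}$ and $\cos\varphi=2\sigma c\,(1+4\sigma^2c^2)^{-1/2}$, so that $\cos\varphi=2\sigma c\sin\varphi$. Inverting the three relations for $|\ell|,|y|,\varphi$ then gives $\rho^2=\pi/(|y|\sin\varphi)$, $\sigma^2=|\ell|^2/(4\pi|y|\sin\varphi)$ and $c=\cos\varphi/(2\sigma\sin\varphi)$. The only admissibility constraint on the parameters is that $c$ be a genuine cosine, i.e. $|c|\le 1$; squaring this out produces precisely $\pi|y|\cos^2\varphi\le|\ell|^2\sin\varphi$, which is \eqref{cut_locus_condition}. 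Substituting $\rho$ and $\sigma$ into $t_{cut}^2=\frac{4\pi^2}{\rho^2}(1+\sigma^2)$ collapses to $4\pi|y|\sin\varphi+|\ell|^2$, giving the second branch of \eqref{cut_time_cases}. I would also verify that this inversion is onto: every triple with $|\ell|>0$, $\varphi\in(0,\pi)$ and $|c|\le 1$ comes from admissible $(\rho,\sigma,\alpha)$, so the cut locus equals, rather than is merely contained in, the stated set.

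Finally, the multiplicity and conjugacy clauses follow by tracking $\alpha$. The two choices $\alpha$ and $-\alpha$ fix $c$, hence the cut point, while leaving $\rho,\sigma$ (hence the length) unchanged, and for $\alpha\notin\{0,\pi\}$ they are distinct, giving the two optimal geodesics reaching each interior cut point. The boundary $|c|=1$ forces $\alpha\in\{0,\pi\}$, where these two geodesics coincide; by the conjugacy clause of Proposition \ref{cut_time_prop} this is exactly where $t_{cut}$ equals the first conjugate time, i.e. where \eqref{cut_locus_condition} holds with equality. The infinitely-many-geodesics and conjugacy statement for $(0,0,y)$ is inherited from the Heisenberg description of $G_1$, enriched by the free choice of the unit vector $k\in S^{n-1}$. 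The computations are elementary trigonometry; the only genuinely delicate point is the surjectivity of the parameterization, namely confirming that the single inequality $|c|\le 1$ carves out exactly the closed region \eqref{cut_locus_condition} and that each of its points is actually attained by a geodesic.
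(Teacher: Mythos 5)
Your proposal is correct and follows essentially the same route as the paper: substitute $\tau_{cut}=\pi$ into the factorized exponential map to get the parametric form \eqref{cut_locus_parametric}, invert for $\rho,\sigma,\cos\alpha$ in terms of $(|\ell|,|y|,\varphi)$, read off \eqref{cut_locus_condition} as the condition $|\cos\alpha|\leq 1$, and substitute back into \eqref{cut_time} to get \eqref{cut_time_cases}. Your explicit tracking of the $\alpha\mapsto-\alpha$ symmetry for the multiplicity and conjugacy clauses matches the discussion the paper gives just before the theorem together with Proposition \ref{cut_time_prop}.
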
 
\begin{proof}
Substituting the cut time from Proposition \ref{cut_time_prop} into equations \eqref{invariants} for the projected geodesics, we get parametrization \eqref{cut_locus_parametric} of the cut locus factorized by the action of $SO(n)$  by three parameters $\alpha\in S^1,\rho>0$ and $\sigma\geq 0$. 
%	\begin{align*}
%	|\ell|=\frac{2\pi \sigma}{\rho}, \;
%	|y|=\frac{\pi\sqrt{4\sigma^2\cos^2\alpha+1}}{\rho^2}, \;
%	\sin\varphi= \frac{1}{\sqrt{4\sigma^2\cos^2\alpha+1}},
%	\end{align*}
%	and we also get
%	\begin{align*}
%	\ell\cdot y=\frac{4\pi^2\sigma^2\cos\alpha}{\rho^3}, \;
%	|\ell\wedge y|=\frac{2\pi^2\sigma}{\rho^3}. \;
%	\end{align*}
In particular, we see that $x=0.$ The cut points satisfying $\ell\wedge y =0$ correspond to geodesics with parameter $\sigma=0$, i.e. to geodesics in $G_1$. In such a case the factorized cut locus has dimension one, namely, it is the Heisenberg cut locus $x=|\ell|=0$, $|y|=\pi/\rho^2$ and its preimage under the action of $SO(n)$ is the vertical space $(0,0,y)$, $y\in\R^n,$ as discussed above. We also get the Heisenberg cut time $t_{cut}=2\pi/\rho=\sqrt{4\pi|y|}$. 
If a cut point satisfies $\ell\wedge y \neq 0$,  we have $\sigma\neq 0$ and it corresponds to a geodesic in $G_2$. In such a case, the dimension of the cut locus is three and for the inverse to factorized exponential map, we may compute
	\begin{align*}
    \rho=\sqrt{\frac{\pi}{|y|\sin\varphi}}, \;
    \sigma=\frac{|\ell|}{\sqrt{4\pi |y| \sin\varphi}}, \;
    \cos\alpha = \frac{\cos\varphi}{|\ell|}\sqrt{\frac{\pi |y|}{\sin\varphi}}.
	\end{align*}
The inverse is well defined since $|\ell|,|y|,\varphi \neq 0$. From here, we see that a point $(0,\ell,y)\in G_2$ is in the cut locus if and only if the absolute value of the right-hand side of the last equation is not greater than one. This condition gives the inequality  \eqref{cut_locus_condition} from the theorem.  Moreover, substituting the formulas for parameters $\rho,\sigma$ into the equation \eqref{cut_time} for the cut time, we get formula \eqref{cut_time_cases} for the cut time in terms of invariants.
\end{proof}

\begin{remark}
	The inequality \eqref{cut_locus_condition} describes the cut locus in $G_2$ in terms of invariants $|\ell|,|y|,\varphi$. If we use $|\ell|,\ell\cdot y,\ell\wedge y$ as coordinates of factor space $G_2/SO(n)$ instead, the cut locus is described by inequality
	\begin{align} \label{cut_locus_condition2}
	\pi (\ell\cdot y)^2\leq |\ell|^3|\ell\wedge y|. 
	\end{align}
\end{remark}

Let us look at the cut locus in more detail. We observe that inequality \eqref{cut_locus_condition} is satisfied for all $|\ell|,|y|$ if $\varphi=\pi/2$ and that the inequality holds for $\varphi$ if and only if it holds for $\pi-\varphi.$ Moreover, for $\varphi\in\langle 0,\pi/2\rangle$, the left-hand side is strictly decreasing to zero and the right-hand side is strictly increasing from zero. Thus for fixed lengths of vectors $\ell,y$ there exist a unique $0<\varphi_0<\pi/2$ such that \eqref{cut_locus_condition} is satisfied for $\varphi\in\langle\varphi_0,\pi-\varphi_0\rangle$, namely
\begin{align} \label{phi_0}
	\varphi_0=\arcsin\left( \frac{-|\ell|^2+\sqrt{|\ell|^4+4\pi^2|y|^2}}{2\pi|y|}\right).
\end{align}
The situation in a fixed plane in $\R^n$ is depicted in Figure \ref{cut_locus_visualization} on the left. 
The points $(0,\ell,y)$ belong to the cut locus if the angle $\varphi$ between defining vectors $\ell,y\in \R^n$  is "right enough." 
%concretely $\varphi\in\langle\varphi_0,\pi-\varphi_0\rangle$ where $\varphi_0$ is given by above formula in terms of norms of the vectors.
\begin{figure}[ht] 
	  \begin{tikzpicture}[scale=0.7]
	\draw[->,arrows={-latex},semithick] (0,2) -- (3,7/5)
	node[near end,below]{\scriptsize $\ell$};
	\draw[dashed] (-0.3,0.5) -- (1,7);
	\draw[dashed] (-4,2.8) -- (4,1.2);
	
	\draw[->,arrows={-latex},semithick] (0,2) -- (4.50,5.97) node[near end, below]{\scriptsize $y$};
	\draw[->,arrows={-latex},semithick] (0,2) -- (-2.62,7.40);
	%\draw (-0.5,4.5) arc (30:-30:1) node[midway,above]{$\varphi$};
	
	\draw[->,arrows={-latex},semithick] (0,2) -- (-1.48,3.35);
	\draw[->,arrows={-latex},semithick] (0,2) -- (1.88,2.67) node[near end, below]{\scriptsize $y$}; 
	
	\draw[<->,arrows={latex-latex},thick,color=blue!50] (-1.31,4.7) arc (115.88:41:3) node[above,yshift=0.8cm,color=blue]{\scriptsize $\varphi\in\langle\varphi_0,\pi-\varphi_0\rangle$};
	\draw[<->,arrows={latex-latex},thick,color=blue!50] (-0.74,2.675) arc (137.63:19.23:1); %node[near end,above,color=blue]{$\varphi$};
	\end{tikzpicture}
  \begin{tikzpicture}[scale=0.8]

\begin{axis}[
axis x line=center,
axis y line=center,
xtick={0,...,0},
ytick={0,...,0},
xticklabel=\empty,
yticklabel=\empty,
xlabel={\scriptsize $|\ell|$},
ylabel={\scriptsize $|y|$},
xlabel style={below right},
ylabel style={above right},
xmin=0,
xmax=2.5,
ymin=0,
ymax=2.5]

%\addplot [domain=-3:3, samples=100, name path=f0, thick, color=blue!50]
%{7.9577*x^2};

%\addplot [domain=-3:3, samples=100, name path=g0, thick, color=blue!50]
%{8.1153*x^2};

%\addplot[blue!10, opacity=0.6] fill between[of=f1 and g1, soft clip={domain=-2:2}];

%\node[color=blue, font=\scriptsize] at (axis cs: 1.1,2.8) {$\sigma=0.1$};
%\node[color=blue, font=\footnotesize] at (axis cs: 1.1,2.8) {$\sigma\to 0$};

\addplot [domain=0:3, samples=100, name path=f0, thick, color=blue!50]
{7.96*x^2};

\addplot [domain=0:3, samples=100, name path=g0, thick, color=blue!50]
{8.12*x^2};

\addplot[blue!10, opacity=0.6] fill between[of=f0 and g0, soft clip={domain=0:2}];

\node[color=blue, font=\scriptsize] at (axis cs: 0.9,2.3) {$\sigma=0.1$};

\addplot [domain=0:3, samples=100, name path=f1, thick, color=blue!50]
{0.884*x^2};

\addplot [domain=0:3, samples=100, name path=g1, thick, color=blue!50]
{1.031*x^2};

\addplot[blue!10, opacity=0.6] fill between[of=f1 and g1, soft clip={domain=0:2}];

\node[color=blue, font=\scriptsize] at (axis cs: 1.9,2.3) {$\sigma=0.3$};

\addplot [domain=0:2.5, samples=100, name path=f2, thick, color=blue!50]
{0.1243*x^2};

\addplot [domain=0:2.5, samples=100, name path=g2, thick, color=blue!50]
{0.2346*x^2};

\addplot[blue!10, opacity=0.6] fill between[of=f2 and g2, soft clip={domain=0:2.5}];

\node[color=blue, font=\scriptsize] at (axis cs: 2.1,1.45) {$\sigma=0.8$};

\addplot [domain=0:2.5, samples=100, name path=f3, thick, color=blue!50]
{0.0088*x^2};

\addplot [domain=0:2.5, samples=100, name path=g3, thick, color=blue!50]
{0.0538*x^2};

\addplot[blue!10, opacity=0.6] fill between[of=f3 and g3, soft clip={domain=0:2.5}];

\node[color=blue, font=\scriptsize] at (axis cs: 2.3,0.45) {$\sigma=3$};
%\node[color=blue, font=\footnotesize] at (axis cs: 2.55,0.5) {$\sigma\to \infty$};

%\draw [dashed, opacity=0.4] (axis cs:{-2,0}) -- (axis cs:{-2,-8});
\end{axis}
\end{tikzpicture}
\caption{Visualizations of the cut locus. Left: A sketch of vectors $\ell, y$ such that point $(0,\ell,y)$ lies in the cut locus; displayed for $|\ell|=3,|y|=2$ and $|\ell|=3,|y|=6.$ Right: Cross sections of the factorized cut locus by planes $\sigma=\text{const}$.}
\label{cut_locus_visualization}
\end{figure}
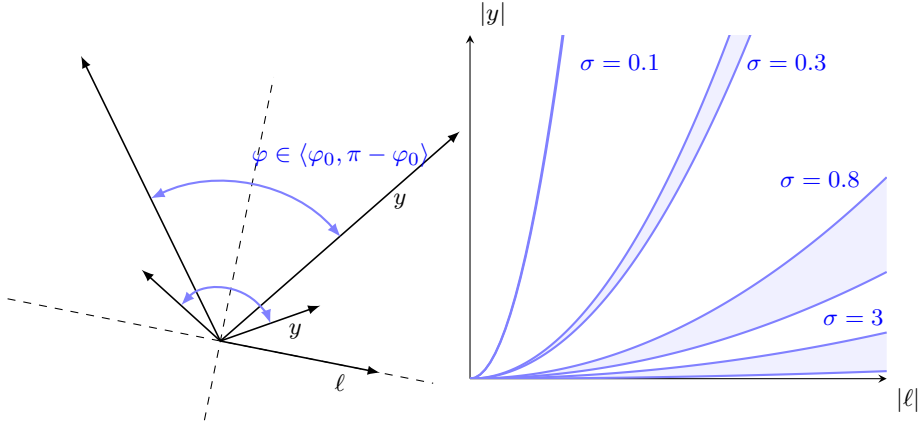

It is illustrative to look also at the cut locus in terms of parameters $\ell,y$ and $\sigma$ instead of $\varphi$ since  this parametrization allows to visualize the Heisenberg limit $\sigma\to 0.$ 
From the second and the last equation in parametric description \eqref{cut_locus_parametric} of the factorized cut locus we get $\rho=2\pi\sigma/|\ell|$ and  $\pi/\rho^2\leq|y|\leq \pi/\rho^2\sqrt{1+4\sigma^2}$ respectively. Substituting  parameter $\rho$ in this inequality gives
\begin{align*}
	\frac{1}{4\pi\sigma^2}|\ell|^2\leq |y|\leq \frac{\sqrt{1+4\sigma^2}}{4\pi\sigma^2}|\ell|^2,
\end{align*}
hence the cross-section of the cut locus defined by a fixed $\sigma$ is the two-dimensional area in plane $x=0$ between parabolas centered in the origin with different focal lengths, see Figure \ref{cut_locus_visualization} on the right.
In particular, we see that for $\sigma \to 0$ we get the Heisenberg cut locus $(0,0,|y|).$ On the other hand, we get $y\to 0$ if $\sigma\to \infty.$

%Sub-Riemannian unite sphere

\subsection{Proof of Proposition \ref{cut_time_prop}} \label{proof}
Since the Heisenberg case $\sigma=0$ has already been discussed, we may restrict to initial co-velocities in $\Lambda_0$ with $\sigma>0$, i.e., we may restrict to geodesics in $G_2$. Moreover, it is sufficient to study the factorization of the exponential map by the $SO(n)$ symmetry, $\widehat{\exp}:\: \mathbb{R}^+\times\widehat{\Lambda}_0\to\widehat{G}_2$, where we denote $\widehat{G}_2=G_2/SO(n)$. Note that applying these symmetries reduces the dimension of initial and target manifolds from the original $2n+1$ to 4. We will apply the extended Hadamard technique and follow the steps in \cite{ABB}. Namely, we define the open sets
\begin{align*}
N_1=\{t\theta\;|\; \theta\in\widehat{\Lambda}_0, t\in (0,t_{cut}(\theta))\}\subset T_0^*\widehat{G}_2, \quad N_2= \widehat{\exp}(N_1)\subset \widehat{G}_2
\end{align*}
where $t_{cut}(\theta)$ is the conjectured cut time given by \eqref{cut_time}, and where by $\widehat{\exp}(\lambda)$ for $\lambda\in N_1$ we mean $\widehat{\exp}(t,\theta)$ where $\lambda=t\theta$, see the explanation in \ref{SRgeodesics}. First, we prove that $N_2=\widehat{G}_2\setminus \widehat{\text{Cut}}_0$, where $\widehat{\text{Cut}}_0$ is the factorization of the conjectured cut locus. Then we show that $\widehat{\exp}|_{N_1}$ is regular and proper, which allows us to conclude that it is a covering. In the last step, we prove that the degree of the covering is one by showing that the induced map on the fundamental groups is surjective. At the end of this procedure, we conclude that $\widehat{\exp}$ is a global diffeomorphism between $N_1$ and $N_2$. As a consequence, the conjectured cut time is the true one. 

We will work with the re-scaled time $\tau$ during the proof. This is possible since the change between times $t=2\tau\sqrt{1+\sigma^2}/\rho$ is a diffeomorphism. So we may view $N_1$ as a four-dimensional open half-cylinder without its central axis, namely we have $N_1=(0,\pi)\times S^1\times \mathbb{R}^+\times\mathbb{R}^+$ parameterized by $(\tau,\alpha,\rho,\sigma)$.  
%Recall that $\widehat{G}_2=\mathbb{R}\times\mathbb{R}^+\times\mathbb{R}^+\times (0,\pi)$ in the parameterization by $(x,|\ell|,|y|,\varphi)$, and that $\widehat{\text{Cut}}_0$ is formed by points $(0,|\ell|,|y|,\varphi)\in \widehat{G}_2$ such that inequality \eqref{cut_locus_condition} holds.

\subsection*{Step 1}
We prove the equality of sets $N_2=\widehat{G}_2\setminus \widehat{\text{Cut}}_0$ by showing that both inclusions hold. Let us recall that the factorized cut locus $\widehat{\text{Cut}}_0$ is the image of $\widehat{\exp}$ in the critical rescaled time $\tau_{cut}=\pi$. Let us also recall that  we may use the set of invariants $x,\ell^2,\ell\cdot y, \ell\wedge y$ or invariants $x,\ell^2,\ell\cdot y, y^2$ as coordinates on $\widehat{G}_2$ since the transition between the coordinates is regular for $\ell\wedge y\neq 0.$
\begin{lemma*}
    $N_2\supseteq\widehat{G}_2\setminus \widehat{\text{Cut}}_0$, i.e. $\widehat{\exp}|_{N_1}$ is surjective on $\widehat{G}_2\setminus \widehat{\text{Cut}}_0.$ In other words, each point in $\widehat{G}_2$ that is not in the cut locus can be reached by a (factorized) geodesics in a time smaller than $\tau_{cut}=\pi.$
\end{lemma*}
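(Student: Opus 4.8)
The plan is to avoid inverting the transcendental system \eqref{invariants} directly and instead to combine the existence of length-minimizers with the Maxwell point at $\tau=\pi$ that was just exhibited around \eqref{cut_locus_parametric}. First I would fix a point $\widehat{p}\in\widehat{G}_2\setminus\widehat{\text{Cut}}_0$ and a representative $p\in G_2$. Since the path group $G$ is a connected and simply connected Carnot group it is complete, so by the sub-Riemannian Hopf--Rinow theorem (e.g. \cite{ABB}) there is a length-minimizing geodesic $\gamma$ from $\id$ to $p$ of arclength $T^*=d(\id,p)$; as step-two groups carry no strictly abnormal minimizers (see \ref{SRgeodesics}), $\gamma$ is a normal geodesic. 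Because its endpoint $p$ lies in $G_2$, Proposition \ref{characterization} forces $\gamma$ to be of the third type, hence $\gamma=\widehat{\exp}(\,\cdot\,,\theta^*)$ for a reduced covector $\theta^*=(\alpha^*,\rho^*,\sigma^*)$ with $\sigma^*>0$, and projecting to $\widehat{G}_2$ gives $\widehat{\exp}(\tau^*,\theta^*)=\widehat{p}$ at the corresponding rescaled time $\tau^*>0$.

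Everything then reduces to the two statements $\tau^*\le\pi$ and $\tau^*\neq\pi$. The second is immediate from the definition of $\widehat{\text{Cut}}_0$ as the image of $\widehat{\exp}$ at $\tau=\pi$: if $\tau^*=\pi$ then $\widehat{p}\in\widehat{\text{Cut}}_0$, contradicting the choice of $\widehat{p}$. For the first, since $\gamma$ minimizes on $[0,T^*]$ we have $T^*\le t_{cut}(\gamma)$, so it suffices to show $t_{cut}(\gamma)\le \tfrac{2\pi}{\rho^*}\sqrt{1+(\sigma^*)^2}$, the value of $t$ at $\tau=\pi$ given by \eqref{cut_time}; equivalently, that no geodesic minimizes beyond rescaled time $\pi$. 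For $\alpha^*\notin\{0,\pi\}$ this is exactly what the discrete symmetry $\alpha\mapsto-\alpha$ provides: the geodesics with parameters $\pm\alpha^*$ and common $\rho^*,\sigma^*$ are distinct but, by \eqref{cut_locus_parametric}, reach the same point of $\widehat{G}_2$ at $\tau=\pi$ with equal length, and the standard Maxwell argument then yields $t_{cut}(\gamma)\le t(\tau=\pi)$. Combining the two statements gives $\tau^*<\pi$, whence $(\tau^*,\theta^*)\in N_1$ and $\widehat{p}=\widehat{\exp}(\tau^*,\theta^*)\in N_2$, as required.

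The main obstacle is the degenerate family $\alpha^*\in\{0,\pi\}$, where the Maxwell pair collapses and the endpoint at $\tau=\pi$ is a conjugate rather than a Maxwell point, in agreement with Proposition \ref{cut_time_prop}. For these directions I would still obtain $t_{cut}(\gamma)\le t(\tau=\pi)$ either from the boundary second-variation computation showing that $\tau=\pi$ is the first conjugate time when $\alpha\in\{0,\pi\}$, or by a continuity argument approximating such a $\gamma$ by minimizers with $\alpha^*\to 0$ or $\pi$. A fully self-contained alternative that bypasses this edge case is to invert \eqref{invariants} explicitly: the group dilations $\delta_s$ act with $\widehat{\exp}$ equivariant under $\rho\mapsto\rho/s$ and leave $\widehat{G}_2\setminus\widehat{\text{Cut}}_0$ invariant (the cut condition \eqref{cut_locus_condition} being homogeneous), so after normalizing the scale one reads the quadrant of $\tau+\alpha$ off the signs of $x$ and $\ell\cdot y$ (note $\sin\tau>0$ and the bracketed factors in \eqref{invariants} are positive for $\tau\in(0,\pi)$, $\sigma>0$) and solves the three scale-invariant equations for $(\tau,\cos^2(\tau+\alpha),\sigma)$; the analytic heart is then to prove, using the monotonicity of $g(\tau)=\tau-\sin\tau\cos\tau$ and $h(\tau)=\sin\tau-\tau\cos\tau$ together with the positivity of $f(\tau,\alpha)$ from Lemma \ref{lema} and \eqref{f_det}, that the recovered $\tau$ lies strictly below $\pi$ exactly when \eqref{cut_locus_condition} fails.
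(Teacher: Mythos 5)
Your argument is a genuinely different route from the paper's. The paper proves surjectivity analytically: it fixes the target invariants, checks that the Jacobian $J_3$ of $(\rho,\sigma,\alpha)\mapsto(x,\ell^2,\ell\cdot y)$ is nonvanishing on $\tau\in(0,\pi)$, inverts these three equations by the implicit function theorem to get $\rho(\tau),\sigma(\tau),\alpha(\tau)$, and then shows by a sign change (Taylor expansion at $\tau=0$, explicit evaluation at $\tau=\pi$, intermediate value theorem) that the remaining equation $F_{y^2}(\tau)=0$ has a root in $(0,\pi)$ exactly when the point is off the cut locus; the case $x=0$ is treated separately. You instead invoke completeness and Hopf--Rinow to produce a minimizer, use the absence of strictly abnormal minimizers and Proposition \ref{characterization} to see it is of the third type, and then bound its arrival time by $\tau\le\pi$ via the Maxwell/conjugate upper bound on the cut time, excluding $\tau=\pi$ by the definition of $\widehat{\text{Cut}}_0$. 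This is a legitimate and arguably more conceptual strategy (it is the one used in several of the cited cut-locus papers), and it buys you freedom from the transcendental inversion; what it costs is that it leans on global existence theory and on material the paper only develops in Step 2.

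Two points need repair. First, your fallback for $\alpha^*\in\{0,\pi\}$ ``by a continuity argument approximating such a $\gamma$ by minimizers with $\alpha^*\to 0$'' does not work as stated: the soft limit-of-minimizers argument yields \emph{upper} semicontinuity of the cut time ($t_{cut}(\theta_0)\ge\limsup t_{cut}(\theta_n)$), whereas passing the bound $t_{cut}(\theta_n)\le t(\tau=\pi)$ to the limit requires \emph{lower} semicontinuity, which is a genuinely harder statement. So you must rely on your first option, namely that $\tau=\pi$ is a conjugate time for $\alpha\in\{0,\pi\}$ --- but that is precisely the Jacobian computation of Step 2 (the functions $A(\tau,\alpha)$, $B(\tau,\alpha)$ and their common vanishing at $\tau=\pi$, $\alpha\in\{0,\pi\}$), together with the identification of critical points of $\widehat{\exp}$ with critical points of $\exp$ via $SO(n)$-equivariance; your proof is therefore not self-contained within Step 1. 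Second, the Maxwell argument needs the small lift from the quotient to $G$: the geodesics with parameters $\pm\alpha^*$ reach the same point of $\widehat{G}_2$ at $\tau=\pi$, so they reach $SO(n)$-related points of $G$; one must compose one of them with the relating rotation (a symmetry fixing the origin) and check the two resulting geodesics in $G$ remain distinct, which follows because they already differ after projection. With these two points filled in, your proof goes through.
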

\begin{proof}
     Let $(x,\ell^2,\ell\cdot y, y^2)\in \widehat{G}_2$ be a point that 
     does not lie in $\widehat{Cut}_0$. We will show that then the equations for $\widehat{\exp}$ given in Proposition \ref{factorized_exp}  have a solution $(t,\rho,\sigma,\alpha)$. At first, let us compute the Jacobian $J_3$ of the function of three variables $(\rho,\sigma,\alpha)\mapsto (x,\ell^2,\ell\cdot y)$ given by equation \eqref{invariants}. A direct computation gives
     \begin{align*}
         J_3&=-\frac{32}{\rho^7}\sigma\tau\sin\tau\Big(2\sigma^2\tau^2(\sin\tau-\tau\cos\tau)+\cos^2(\tau+\alpha)(\tau^2+\tau\sin\tau\cos\tau-2\sin^2\tau)\\
         &+\tau\sin\tau(\tau-\sin\tau\cos\tau)\Big).
     \end{align*}
    Since $\rho,\sigma>0$ and all the functions $\sin\tau-\tau\cos\tau,$ $\tau^2+\tau\sin\tau\cos\tau-2\sin^2\tau$, $\tau-\sin\tau\cos\tau$ are positive for $\tau\in(0,\pi)$, we conclude that 
    \begin{align*}
        J_3(t,\rho,\sigma,\alpha)<0 \qquad \text{for}  \qquad\tau\in(0,\pi).
    \end{align*}
    Hence, by the inverse function theorem,  we can express parameters $\rho(\tau),\sigma(\tau),\alpha(\tau)$ as functions of $\tau$ from the first three equations in \eqref{invariants} and substitute them into the fourth equation \eqref{invariants2}. Now it is sufficient to prove that this single equation of one variable $\tau$ has a solution between zero and $\pi$. Namely, we need to prove that there is always such a $\tau$ that satisfies
    \begin{align*} 
   F_{y^2}(\tau):=y^2\rho^4-\left(\left(\tau -\cos\tau \sin\tau\right)^{2}+ 4\sigma^{2} \cos^{2}\!\left(\tau +\alpha \right) \left(\tau  \cos\tau-\sin\tau\right)^{2}\right)=0.
   \end{align*}
   We prove the existence by exploring the local behavior of this function at the limiting points 0 and $\pi$. 
   
   First, let us assume that $x\neq 0.$ Then we can conclude that $\rho(0)=\rho(\pi)=0$ 
  since, by substituting the limiting points into the first equation in \eqref{invariants}, we get  $\rho(0)x=\rho(\pi)x=0$. Therefore 
   $F_{y^2}(0)=y^2\rho(0)^4=0$ and not just that, also the derivatives up to order three will vanish in $\tau=0.$ Then, by using the Taylor expansion in zero, we get 
   $$F_{y^2}(\tau)=\big(\rho'(0)\big)^4y^2\tau^4+o(\tau^4)>0$$
   for $\tau$ small enough since $\rho'(0)\neq 0$. This can be shown by computing  $x^2+\ell^2$ from the first two equations in $\eqref{invariants}$ and by taking two derivatives. Namely, we compute $\rho^2(x^2+\ell^2)=4(\sin^2\tau+\tau^2\sigma^2),$ and by taking the second derivative evaluated in zero, we obtain $(\rho'(0))^2(x^2+\ell^2)=4(1+\sigma^2).$ On the other hand, substituting the other end of our interval, $\tau=\pi$, into the function, we get
   $$F_{y^2}(\pi)=-\pi^2\big(1+4\sigma(\pi)^2\cos^2(\alpha(\pi))\big)<0$$
   since $\rho(\pi)=0$ as discussed above. The claim then follows by the continuity of function $F_{y^2}.$

   Now let us discuss the case $x=0.$ Then the first equation in \eqref{invariants} implies that $\sin(\tau+\alpha)=0,$ it means $\cos(\tau+\alpha)=\pm 1,$ and the formula for $\widehat{\exp}$ simplifies to
   \begin{align*}
     x &=0, \\
   \ell^2 &=\frac{4}{\rho^2} \left(\sin^{2}\tau + \tau^{2} \sigma^{2}\right)
   , \\
   y\cdot \ell &=\frac{2}{\rho^3}\left( \sin \tau  \left(\tau -\cos \tau  \sin \tau\right)+ 2\tau  \,\sigma^{2}  \left(\sin\tau-\tau  \cos \tau \right)\right),\\
   %|y\wedge\ell|& = \frac{2}{\rho^3}\sigma\left(  \tau  \left(\tau -\cos \tau  \sin \tau\right)- 2\sin\tau  \left(\sin\tau-\tau  \cos \tau \right)\right),
   y^2&=\frac{1}{\rho^4} \left(\left(\tau -\cos\tau \sin\tau\right)^{2}+ 4\sigma^{2} \left(\tau  \cos\tau-\sin\tau\right)^{2}\right).
   \end{align*}
Computing the Jacobian $J_2$ of function of two variables $(\rho,\sigma)\mapsto(\ell^2,\ell\cdot y)$ we get 
\begin{align*}
    J_2=\frac{16\sigma\tau}{\rho^6}\Big(2\sigma^2\tau^2(\sin\tau-\tau\cos\tau)+\sin\tau(3\tau^2+\tau\cos\tau\sin\tau-4\sin^2\tau)\Big).
\end{align*}
It is easy to see that $J_2>0$ for $\tau\in(0,\pi)$ since the same is true for functions $\sin\tau-\tau\cos\tau$ and $3\tau^2+\tau\cos\tau\sin\tau-4\sin^2\tau.$ Therefore, we may express $\rho(\tau),\sigma(\tau)$ as functions of time $\tau$ and substitute them into the equation for $y^2.$ Similarly as before, we want to show the existence of a solution of 
\begin{align*}
    F_{y^2}(\tau):=y^2\rho^4-\Big((\left(\tau -\cos\tau \sin\tau\right)^{2}+ 4\sigma^{2} \left(\tau  \cos\tau-\sin\tau\right)^{2}\Big)=0
\end{align*}
in $(0,\pi).$ From the second equation we get $\ell^2\rho(0)^2=0$ and thus $\rho(0)=0.$ We still have the equality $\rho^2(x^2+\ell^2)=4(\sin^2\tau+\tau^2\sigma^2)$ that yields $\rho'(0)\neq 0.$ It means that, in $\tau=0$, we have the same expansion of $F_{y^2}$ as before. In particular, it is positive near zero. In $\tau=\pi$, we get
\begin{align*}
    F_{y^2}(\pi)=y^2\rho(\pi)^4-\pi^2(1+4\sigma(\pi)^2),
\end{align*}
and we show that this value is negative for $y\not\in Cut_0.$ Then we can argue again by the continuity of this function that a solution between 0 and $\pi$ exists. Indeed, substituting $\tau=\pi$ in the above equations for $\widehat{\exp}$ and solving for $\rho,\sigma$ gives 
\begin{align*}
    \rho(\pi)=\frac{\ell^2}{\ell\cdot y}, \qquad \sigma(\pi)=\frac{\ell^3}{2\pi(\ell\cdot y)},
\end{align*}
and thus $$F_{y^2}(\pi)=\frac{y^2\ell^8-\pi^2(\ell\cdot y)^4-\ell^6(\ell\cdot y)^2}{(\ell\cdot y)^4}=\frac{\ell^6|\ell\wedge y|^2-\pi^2(\ell\cdot y)^4}{(\ell\cdot y)^4},$$
where we have used $\ell^2y^2-(\ell\cdot y)^2=|\ell\wedge y|^2.$ Now we see that $F_{y^2}(\pi)<0$ is opposite to inequality \eqref{cut_locus_condition2} describing the points in the cut locus. Let us remark that it is also possible to derive this inequality from the parametric description of the cut locus given in \eqref{cut_locus_parametric}.
\end{proof}

For the other inclusion, we basically need to prove that $F_{y^2}(\tau)>0$ for all $\tau\in(0,\pi)$ if the invariants lie in the cut locus. Obviously,  local information in the limiting points is not enough and we must proceed differently, by using several estimates for this function. 

\begin{lemma*}
    $N_2\subseteq\widehat{G}_2\setminus \widehat{\text{Cut}}_0$, i.e. $\widehat{\exp}|_{N_1}$ does not have values in $\widehat{\text{Cut}}_0$. In other words, it is not possible to reach any point in the cut locus by a time smaller than $\tau_{cut}=\pi.$
\end{lemma*}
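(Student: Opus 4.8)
The plan is to reduce the statement to a single scalar inequality and then settle that inequality by estimates of the type used in Lemma~\ref{lema}. Recall that the factorized cut locus lies inside the hyperplane $x=0$ and, by \eqref{cut_locus_condition2}, is carved out there by $Q\geq 0$, where $Q:=|\ell|^{3}|\ell\wedge y|-\pi(\ell\cdot y)^{2}$. Hence a point $\widehat{\exp}(\tau,\rho,\sigma,\alpha)$ with $\tau\in(0,\pi)$ and $\sigma>0$ can lie in $\widehat{\text{Cut}}_0$ only if its first coordinate vanishes. By the first line of \eqref{invariants} one has $x=\tfrac{2}{\rho}\sin\tau\,\sin(\tau+\alpha)$, and since $\sin\tau>0$ on $(0,\pi)$ this forces $\sin(\tau+\alpha)=0$, i.e.\ $\cos^{2}(\tau+\alpha)=1$. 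Thus whenever $x\neq 0$ the image point is automatically outside the cut locus, and it remains only to rule out membership at the times where $x$ vanishes. (For $\alpha\in\{0,\pi\}$ one has $x>0$ throughout $(0,\pi)$ and there is nothing to check; these are precisely the conjugate directions.)

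First I would substitute $\cos^{2}(\tau+\alpha)=1$ into \eqref{invariants}; writing $A(\tau)=\tau-\sin\tau\cos\tau$, $B(\tau)=\sin\tau-\tau\cos\tau$ and keeping $f_1$ as in the proof of Lemma~\ref{lema}, this yields $\ell^{2}=\tfrac{4}{\rho^{2}}(\sin^{2}\tau+\tau^{2}\sigma^{2})$, $\ell\cdot y=\pm\tfrac{2}{\rho^{3}}(\sin\tau\,A+2\tau\sigma^{2}B)$ and $|\ell\wedge y|=\tfrac{2\sigma}{\rho^{3}}f_1(\tau)$. Every power of $\rho$ then cancels from the homogeneous condition $Q\geq 0$, which becomes $4\sigma(\sin^{2}\tau+\tau^{2}\sigma^{2})^{3/2}f_1\geq\pi(\sin\tau\,A+2\tau\sigma^{2}B)^{2}$. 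Consequently the lemma is equivalent to the strict reverse inequality
\begin{align*}
\pi\,\bigl(\sin\tau\,A+2\tau\sigma^{2}B\bigr)^{2}\ >\ 4\sigma\,(\sin^{2}\tau+\tau^{2}\sigma^{2})^{3/2}\,f_1(\tau),\qquad \tau\in(0,\pi),\ \sigma>0. \tag{$\star$}
\end{align*}
This is exactly the positivity of $F_{y^{2}}$ with the target parameters inserted. A direct check shows that $(\star)$ degenerates to an \emph{equality} at $\tau=\pi$ for every $\sigma$ (consistent with $\tau_{cut}=\pi$ and with the equality case of \eqref{cut_locus_condition}), while near $\tau=0$ the left side is of order $\tau^{8}$ and the right side of order $\tau^{9}$, so $(\star)$ holds there with ample margin.

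To prove $(\star)$ I would square both sides — legitimate since $\sin\tau,A,B,f_1>0$ on $(0,\pi)$ by Lemma~\ref{lema} — and view the result as a quartic $P(s)$ in $s=\sigma^{2}$. Its constant term is $\pi^{2}\sin^{4}\tau\,A^{4}>0$, and its leading coefficient equals $16\tau^{4}\bigl(\pi^{2}B^{4}-\tau^{2}f_1^{2}\bigr)=16\tau^{4}(\pi B^{2}-\tau f_1)(\pi B^{2}+\tau f_1)$, so positivity of the top coefficient reduces to the one-variable inequality $\pi(\sin\tau-\tau\cos\tau)^{2}>\tau f_1(\tau)$ on $(0,\pi)$. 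The latter is again an equality at $\tau=\pi$ and can be handled by the same mixture of a Taylor estimate near $0$ and the global bounds $-1<\sin\tau,\cos\tau<1$ that was used for $f_0,f_1$ in Lemma~\ref{lema}. What remains is to control the three intermediate coefficients of $P$, whose signs are not fixed, and to conclude $P(s)>0$ for all $s>0$.

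The hard part will be exactly this last step: because $(\star)$ is \emph{tight} at $\tau=\pi$ and both extreme regimes $\sigma\to 0$ and $\sigma\to\infty$ are governed by one-variable identities that themselves become equalities at $\tau=\pi$, any crude bound discarding cancellation fails near the endpoint. I expect two workable routes. One is to drop the (nonnegative) cross term in $(\sin\tau\,A+2\tau\sigma^{2}B)^{2}$: since that term carries a factor $\sin\tau$ and hence vanishes at $\tau=\pi$, the sharper but cross-term-free inequality $\pi(\sin^{2}\tau\,A^{2}+4\tau^{2}\sigma^{4}B^{2})>4\sigma(\sin^{2}\tau+\tau^{2}\sigma^{2})^{3/2}f_1$ preserves the exact tightness at $\tau=\pi$ and decouples the two $\sigma$-regimes, after which it should follow from a finite list of single-variable inequalities of the $f_0,f_1$ type. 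The alternative is to show directly that the quartic $P$ has no positive real root, exhibiting it as a positive combination of the manifestly nonnegative blocks $\pi^{2}\sin^{4}\tau\,A^{4}$ and $16\tau^{4}(\pi^{2}B^{4}-\tau^{2}f_1^{2})\,s^{4}$ together with controlled cross terms. As a consistency check, the boundary equality at $\tau=\pi$ reproduces precisely the conjugate points on $\widehat{\text{Cut}}_0$ predicted by Proposition~\ref{cut_time_prop}, which is exactly the behaviour one should see.
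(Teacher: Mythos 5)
Your reduction is correct and, up to the point where you state the inequality $(\star)$, it runs parallel to the paper: the paper likewise observes that $x=0$ forces $\sin(\tau+\alpha)=0$, hence $\cos^2(\tau+\alpha)=1$, and then must show that the remaining equations are incompatible with the cut-locus condition for $\tau\in(0,\pi)$. (A minor slip: for $\alpha=\pi$ one gets $x=-\tfrac{2}{\rho}\sin^2\tau<0$, not $x>0$; the conclusion $x\neq 0$ is what matters.) Your framing via the invariant inequality \eqref{cut_locus_condition2} is a legitimate and arguably cleaner reformulation than the paper's, which instead equates the image of $\widehat{\exp}$ with the parametric description \eqref{cut_locus_parametric} of the cut locus and shows the resulting system in $(\tau,\rho,\sigma)$ with parameters $(\bar\sigma,\bar\alpha)$ has no solution.

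The genuine gap is that $(\star)$ — which carries the entire analytic content of the lemma — is never proved. You correctly diagnose why it is hard (tightness at $\tau=\pi$ in every regime of $\sigma$), but the two routes you offer are speculative. The first route replaces $(\star)$ by the cross-term-free inequality $\pi(\sin^2\tau\,A^2+4\tau^2\sigma^4B^2)>4\sigma(\sin^2\tau+\tau^2\sigma^2)^{3/2}f_1$, which is \emph{strictly stronger} than $(\star)$ (you discard a nonnegative term from the left-hand side) and is still tight at $\tau=\pi$; its validity on all of $(0,\pi)\times\mathbb{R}^+$ is not established and cannot be taken for granted. The second route ("show the quartic $P(s)$ has no positive root") merely restates the problem, and you yourself note that the signs of the three intermediate coefficients are not under control. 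The paper avoids this impasse precisely by \emph{not} collapsing everything into one two-variable inequality: it keeps the equations $F_{\ell^2}=0$ and $F_{\ell\wedge y}=0$ alive, extracts from them the intermediate bound $1/\rho^2\geq \pi\tau/f_\wedge(\tau)$ with $f_\wedge=\tau^2+\tau\sin\tau\cos\tau-2\sin^2\tau$, eliminates $\sigma^2$ from $F_{y^2}$ using $F_{\ell^2}=0$, and is then left with a \emph{quadratic} in the free parameter $\bar\sigma$ whose two coefficients are each shown positive on $(0,\pi)$ by elementary single-variable estimates (e.g.\ the positivity of $3\sin\tau-3\tau\cos\tau-\tau^2\sin\tau$). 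Some device of this kind — an auxiliary bound that breaks the $3/2$-power and decouples the $\sigma$-dependence — is exactly what your write-up is missing; without it the proof is incomplete.
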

\begin{proof}
We will prove that equations for $\widehat{\exp}$ do not have any solution in $N_1$ if the invariants lie in $\widehat{Cut}_0$. Let us consider a point in the cut locus and its parametric description \eqref{cut_locus_parametric}  by parameters $\bar{\rho},\bar{\sigma},\bar{\alpha}$.  Then  equation  \eqref{invariants} for $\widehat{\exp}$  in coordinates $(x,\ell^2,|\ell\wedge y|,y^2)$ reads
    \begin{align*}
 \frac{2}{\rho}  \sin \tau  \sin \! \left(\tau +\alpha \right)=&0, \\
\frac{4}{\rho^2} \left(\sin^{2}\tau \cos^{2}\!\left(\tau+\alpha \right)+ \tau^{2} \sigma^{2}\right)=&\frac{4\pi^2 \bar{\sigma}^2}{\bar{\rho}^2}, \\
%y\cdot \ell &=-\frac{4\pi^2 \sigma^{2} }{\rho^3}\cos\alpha,\\
\frac{2}{\rho^3}\sigma\left(  \tau  \left(\tau -\cos \tau  \sin \tau\right)- 2\sin\tau\cos^2 \! \left(\tau+\alpha \right)  \left(\sin\tau-\tau  \cos \tau \right)\right)=&\frac{2\pi^2\bar{\sigma}}{\bar{\rho}^3}, \\
\frac{1}{\rho^4} \left(\left(\tau -\cos\tau \sin\tau\right)^{2}+ 4\sigma^{2} \cos^{2}\!\left(\tau +\alpha \right) \left(\sin\tau-\tau  \cos\tau\right)^{2}\right)=&\frac{\pi^2}{\bar{\rho}^4} (1+ 4\bar{\sigma}^{2} \cos^{2}\bar{\alpha}).
\end{align*}
We immediately see from the first equation that we must have $\alpha=-\tau$ if a solution satisfying $\tau\in(0,\pi)$ would exist. Moreover, by changing the scaling factor $\rho$, we can set parameter $\bar{\rho}$ on the right-hand side to any value. In other words, a solution exists if and only if it exists for $\bar{\rho}=1.$ Therefore, it is sufficient to prove that the following system with variables $\tau,\rho,\sigma$ does not have solution for any choice of parameters $\bar{\sigma},\bar{\alpha}$
    \begin{align*}
F_{\ell^2}&:=\frac{1}{\rho^2} \left(\sin^{2}\tau+ \tau^{2} \sigma^{2}\right)-\pi^2 \bar{\sigma}^2=0, \\
F_{\ell\wedge y}&:=\frac{1}{\rho^3}\sigma\left(  \tau  \left(\tau -\cos \tau  \sin \tau\right)- 2\sin\tau \left(\sin\tau-\tau  \cos \tau \right)\right)-\pi^2\bar{\sigma}=0, \\
F_{y^2}&:=\frac{1}{\rho^4} \left(\left(\tau -\cos\tau \sin\tau\right)^{2}+ 4\sigma^{2} \left(\sin\tau-\tau  \cos\tau\right)^{2}\right)-\pi^2 (1+ 4\bar{\sigma}^{2} \cos^{2}\bar{\alpha})=0.
\end{align*}
Similar to the proof in the previous lemma, we will assume that the first two equations are satisfied and we will prove that the last equation cannot be satisfied.
By leaving out term $\sin^2\tau$ in the first equation, we get an estimate $\pi\bar{\sigma}\rho\geq \tau\sigma$. Using this inequality in the second equation, we get an estimate for the value of parameter $\rho$ in terms of $\tau$
\begin{align*}
    \frac{1}{\rho^2}\geq\frac{\pi\tau}{f_\wedge(\tau)},
\end{align*}
where
\begin{align*}
   f_\wedge(\tau)&= \tau  (\tau -\cos \tau  \sin \tau)- 2\sin\tau (\sin\tau-\tau  \cos \tau) \\
   &=\tau^2+\tau\sin\tau\cos\tau-2\sin^2\tau>0 
\end{align*}
for $\tau\in(0,\pi)$. The latter inequality can be easily shown by computing the second derivative $f_\wedge''(\tau)=4\sin\tau(\sin\tau-\tau\cos\tau)>0,$ and by evaluating $f_\wedge(0)=0$. Now we use this estimate for $\rho$ to prove that $F_{y^2}>0$. Before we do this, we eliminate parameter $\sigma$ in $F_{y^2}$ using the equation $F_{\ell^2}=0$, namely we substitute $$\sigma=\frac{\pi^2\bar{\sigma}^2\rho^2-\sin^2\tau}{\tau^2},$$
and collecting terms with respect to powers of $\rho$ we obtain 
\begin{align*}
   F_{y^2}&= \frac{1}{\rho^4} \left(\left(\tau -\cos\tau \sin\tau\right)^{2}-\frac{4\sin^2\tau}{\tau^2}\left(\sin\tau-\tau  \cos\tau\right)^{2}\right)\\
   &+\frac{1}{\rho^2}\frac{4\pi^2\bar{\sigma}^2}{\tau^2}\left(\sin\tau-\tau  \cos\tau\right)^{2}-\pi^2 (1+ 4\bar{\sigma}^{2} \cos^{2}\bar{\alpha}).
\end{align*}
Then using the estimate and we get
\begin{align*}
 F_{y^2}&\geq \frac{\pi^2\tau^2}{f_\wedge(\tau)^2} \left(\left(\tau -\cos\tau \sin\tau\right)^{2}-\frac{4\sin^2\tau}{\tau^2}\left(\sin\tau-\tau  \cos\tau\right)^{2}\right)\\
   &+\frac{4\pi^3\bar{\sigma}^2}{f_\wedge(\tau)\tau}\left(\sin\tau-\tau  \cos\tau\right)^{2}-\pi^2 (1+ 4\bar{\sigma}^{2} \cos^{2}\bar{\alpha})\\
   &= \frac{\pi^2}{f_\wedge(\tau)^2}\left(\tau^2(\tau -\cos\tau \sin\tau)^2-4\sin^2\tau(\sin\tau-\tau  \cos\tau)^2-f_\wedge(\tau)^2\right)\\
   &+\bar{\sigma}^2\frac{4\pi^2}{f_\wedge(\tau)\tau}\left(\pi(\sin\tau-\tau  \cos\tau)^2-f_\wedge(\tau)\tau\cos^2\bar{\alpha}\right),
\end{align*}
where we have collected the terms with respect to the power of $\bar{\sigma}$. Since this parameter can be arbitrary, we must prove that both coefficients are positive for $\tau\in (0,\pi).$ Indeed, the absolute term is equal to 
\begin{align*}
  \frac{4\pi^2\sin\tau}{f_\wedge(\tau)} (\sin\tau-\tau  \cos\tau)
\end{align*}
by definition of the function $f_\wedge$, and it is positive since all factors are positive on the given interval. To prove that the other coefficient is positive, we apply the estimates  $\pi>\tau$ and  $\cos^2\bar{\alpha}<1$. Then, using the definition of $f_\wedge,$ we get
\begin{align*}
    \pi(\sin\tau-\tau  \cos\tau)^2-f_\wedge(\tau)\tau\cos^2\bar{\alpha}&>\tau\left((\sin\tau-\tau  \cos\tau)^2-f_\wedge(\tau)\right)\\
    &\quad =\tau\sin\tau\left(3\sin\tau-3\tau\cos\tau-\tau^2\sin\tau\right)>0.
\end{align*}
The positivity of the function in brackets follows from the fact that it vanishes in $\tau=0$ and that its derivative is equal to $\tau(\sin\tau-\tau\cos\tau)$ and thus is positive for $\tau\in(0,\pi)$. We conclude that $F_{y^2}>0$, which means that there is no solution of equations for $\widehat{\exp}$ such that $\tau$ lies in this interval. In other words, there is no geodesics that would reach  $\widehat{Cut}_0$ in time smaller than $\tau_{cut}=\pi$.
\end{proof}

\subsection*{Step 2}
We prove that the differential of $\widehat{\exp}: N_1\to N_2=\widehat{G}_2\setminus\widehat{Cut}_0$ is invertible at every point in $N_1,$ i.e. there are no conjugate points in $N_2$ for $\widehat{\exp}|_{N_1}.$ Recall that $N_1$ is parameterized by $t,\rho,\sigma,\alpha$ such that $\rho,\sigma>0,\alpha\in S^1$ and  $t\in(0,t_{cut})$, where $t_{cut}=2\pi\sqrt{1+\sigma^2}/\rho.$
\begin{lemma*}
	For any $\theta=(\rho,\sigma,\alpha)\in\widehat{\Lambda}_0$ the first conjugate time $t_{conj}$ satisfies 
	\begin{align*}
	t_{conj}(\theta)\geq t_{cut}(\theta).
	\end{align*}
 The equality holds if and only if  $\alpha=0$ or $\alpha=\pi.$
\end{lemma*}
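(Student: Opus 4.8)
The plan is to identify conjugate points with critical points of the factorized exponential map and to reduce the statement to the sign of a single explicit Jacobian. Since $\widehat{\exp}$ sends the four--dimensional $N_1$, with coordinates $(t,\rho,\sigma,\alpha)$, into the four--dimensional $\widehat{G}_2$, with coordinates $(x,\ell^2,\ell\cdot y,y^2)$, a covector $t\theta$ is a critical point of $\widehat{\exp}$ — i.e.\ the geodesic has a conjugate point at parameter $t$ — exactly when the Jacobian determinant of $(t,\rho,\sigma,\alpha)\mapsto\widehat{\exp}$ vanishes there. For fixed $(\rho,\sigma,\alpha)$ the reparametrisation $t\mapsto\tau=t\rho/(2\sqrt{1+\sigma^2})$ is a diffeomorphism with positive derivative, so the $(t,\rho,\sigma,\alpha)$-- and the $(\tau,\rho,\sigma,\alpha)$--Jacobians differ only by the positive factor $\rho/(2\sqrt{1+\sigma^2})$. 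Hence it suffices to analyse
$$\widetilde J:=\det\frac{\partial(x,\ell^2,\ell\cdot y,y^2)}{\partial(\tau,\rho,\sigma,\alpha)},$$
treating $\tau,\rho,\sigma,\alpha$ as independent variables: I must show that $\widetilde J\neq 0$ for $\tau\in(0,\pi)$, and that at $\tau=\pi$ it vanishes precisely for $\alpha\in\{0,\pi\}$.

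First I would compute $\widetilde J$ from \eqref{invariants} and \eqref{invariants2}. Expanding along the $\tau$--column is convenient: it produces four $3\times 3$ minors, one of which — obtained by deleting the $y^2$ row — is precisely the determinant $J_3$ already evaluated in Step~1, so that block of the work is reusable. By the $\rho$--homogeneity of the invariants (weights $-1,-2,-3,-4$), one has $\widetilde J=\rho^{-11}h(\tau,\sigma,\alpha)$, so the sign question involves only the three variables $\tau,\sigma,\alpha$, and the residual $\alpha$--dependence enters solely through $\cos(\tau+\alpha)$ and $\sin(\tau+\alpha)$.

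To prove $\widetilde J\neq0$ on $(0,\pi)$ I would organise $h$ as a polynomial in the parameter $\sigma>0$ and show that each coefficient is sign--definite for $\tau\in(0,\pi)$, exactly as the terms were separated by powers of $\bar\sigma$ in the second lemma of Step~1; since the claim must hold for all $\sigma>0$, same--sign coefficients (not all vanishing) give $h\neq0$. The residual $\alpha$--dependence of each coefficient is then controlled by the sandwich $0\le\cos^2(\tau+\alpha)\le1$, reducing everything, as before, to the positivity on $(0,\pi)$ of the auxiliary functions $\sin\tau-\tau\cos\tau$, $\tau-\sin\tau\cos\tau$ and $f_\wedge(\tau)=\tau^2+\tau\sin\tau\cos\tau-2\sin^2\tau$ (the latter via $f_\wedge''(\tau)=4\sin\tau(\sin\tau-\tau\cos\tau)>0$ and $f_\wedge(0)=0$), possibly supplemented by truncated Taylor estimates near $\tau=0$ together with the global bounds $-1<\sin\tau,\cos\tau<1$. \emph{This sign analysis is the main obstacle:} unlike the one--variable reductions of Step~1, $\widetilde J$ is a genuinely multivariable trigonometric object, so a naive bound need not be tight and the grouping of terms must be chosen carefully; in particular $\widetilde J$ has no global factor $\sin\tau$ (it is nonzero at $\tau=\pi$ for generic $\alpha$), so positivity of $\sin\tau$ alone does not suffice.

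Finally I would settle the equality case at $\tau=\pi$, where $\sin\tau=0$ and $\cos\tau=-1$ collapse most entries. There the gradient of $x$ reduces to $(\tfrac{2}{\rho}\sin\alpha,0,0,0)$, so expanding $\widetilde J$ along the $x$--row gives $\widetilde J|_{\tau=\pi}=\tfrac{2}{\rho}\sin\alpha\cdot M$ with $M=\det\partial(\ell^2,\ell\cdot y,y^2)/\partial(\rho,\sigma,\alpha)$ at $\tau=\pi$. A direct check shows the $\alpha$--column of $M$ is itself proportional to $\sin\alpha$, so $\widetilde J|_{\tau=\pi}=\tfrac{2}{\rho}\sin^2\alpha\cdot M'$, and the reduced $3\times 3$ cofactor $M'$ evaluates to a nonzero (negative) multiple of $\sigma^3$; consequently $\widetilde J|_{\tau=\pi}=0\iff\sin\alpha=0\iff\alpha\in\{0,\pi\}$. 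Combining the three facts: for $\alpha\in\{0,\pi\}$ the first conjugate rescaled time equals $\pi$, i.e.\ $t_{conj}=t_{cut}$; for $\alpha\notin\{0,\pi\}$ there is no conjugate point for $\tau\in(0,\pi]$, whence $t_{conj}>t_{cut}$. In every case $t_{conj}\ge t_{cut}$, with equality if and only if $\alpha\in\{0,\pi\}$, as claimed.
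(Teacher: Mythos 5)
Your overall route is the same as the paper's: pass to the rescaled time $\tau$, identify conjugate times with zeros of the Jacobian of the factorized exponential map $(\tau,\rho,\sigma,\alpha)\mapsto(x,\ell^2,\ell\cdot y,y^2)$, organize that Jacobian by powers of $\sigma$, control the $\alpha$--dependence through $0\le\cos^2(\tau+\alpha)\le 1$, and at $\tau=\pi$ extract an overall factor $\sin^2\alpha$. Your endgame at $\tau=\pi$ is correct and matches what the paper finds: the Jacobian there is a nonzero multiple of $\sigma^3\sin^2\alpha$, so it vanishes exactly for $\alpha\in\{0,\pi\}$.

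The gap is that the heart of the lemma --- the nonvanishing of the Jacobian for all $\tau\in(0,\pi)$ --- is announced as a plan but not carried out, and the plan as stated does not quite contain the inequality that actually has to be proved. The paper's computation gives the explicit factorization
\begin{align*}
J_4=\frac{256\,\sigma}{\rho^{9}}\, f(\tau,\alpha)\bigl(A(\tau,\alpha)\,\sigma^2+B(\tau,\alpha)\bigr),
\end{align*}
with $f$ the positive function from the $\det(\ell,y)$ lemma, $B=\tfrac12\sin\tau(\sin\tau-\tau\cos\tau)f$, and
\begin{align*}
A(\tau,\alpha)=\tau^2(\sin\tau-\tau\cos\tau)^2-(\tau^2-\sin^2\tau)^2\cos^2(\tau+\alpha).
\end{align*}
Positivity of $B$ is immediate, but positivity of $A$ is the delicate point: after the bound $\cos^2(\tau+\alpha)\le 1$ and factoring the difference of squares, it reduces to showing
\begin{align*}
\tau\sin\tau-\tau^2\cos\tau-\tau^2+\sin^2\tau>0 \quad\text{on }(0,\pi),
\end{align*}
which is \emph{not} a consequence of the three auxiliary positivity facts you list ($\sin\tau-\tau\cos\tau$, $\tau-\sin\tau\cos\tau$, $f_\wedge$); the paper needs a separate argument (a double-angle rewrite and a degree-twelve Taylor lower bound) to establish it. So your skeleton is the right one, but without producing the explicit decomposition above and proving this last inequality, the claim $\widetilde J\neq 0$ on $(0,\pi)$ --- and hence the whole lemma --- remains unproved. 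A secondary, smaller point: both you and the paper implicitly identify conjugate points of the original exponential map with critical points of the $SO(n)$--factorized one; if you want a self-contained proof, that identification deserves a sentence of justification.
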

\begin{proof}
We will prove that the Jacobian of $\widehat{\exp}|_{N_1}$ vanishes for $\alpha=0,\pi$ in time $t=t_{cut}$ and is  strictly positive otherwise. We will work with the formula for the factorized exponential map $\widehat{\exp}$ given in Proposition \ref{factorized_exp}. In particular, we will use the re-scaled time $\tau=t\rho/2\sqrt{1+\sigma^2}$. Note that this does not affect the sign of Jacobian since it is a change of a variable by a positive multiple. Then, by direct computation of the determinant of partial derivatives of the function $\widehat{\exp}$ given by $\eqref{invariants}$ that has been checked by a symbolic software, we find that the Jacobian can be written as
	\begin{align*}
	J_4=\frac{256\sigma}{\rho^9} f(\tau,\alpha)\left(A(\tau,\alpha)\sigma^2+B(\tau,\alpha)\right),
	\end{align*}
where $f(\tau,\alpha)$ is the positive function given by equation \eqref{f_det}  defining the determinant $\operatorname{det}(y,\ell)$, see lemma \ref{lema}, and where 
	\begin{align*}
	A(\tau,\alpha)&=\tau^2(\sin\tau-\tau\cos\tau)^2-(\tau^2-\sin^2\tau)^2\cos^2(\tau+\alpha), \\
	B(\tau,\alpha)&=\frac12\sin\tau(\sin\tau-\tau\cos\tau)f(\tau,\alpha). \label{B}
	\end{align*}
 Since we have proved already that function $f(\tau,\alpha)$ is strictly positive for all $\tau>0$ in the proof of lemma \ref{lema}, we know that the zeros of Jacobian $J$ are given by zeros of function $$A(\tau,\alpha)\sigma^2+B(\tau,\alpha).$$
 We show that $A(\tau,\alpha)>0$, $B(\tau,\alpha)>0$ and thus the whole Jacobian is positive for $\tau\in(0,\pi),$ which is equivalent to $\tau_{conj}\geq \pi=\tau_{cut}.$ Moreover, we see that the equality happens if and only if $A(\pi,\alpha)=B(\pi,\alpha)=0,$ which is equivalent to $\alpha=0$ or $\alpha=\pi.$ 
 
 The inequality $B(\tau,\alpha)>0$ follows directly from the defining formula for the function $B(\tau,\alpha)$. To prove $A(\tau,\alpha)>0$, we use the estimate $\cos^2(\tau+\alpha)\leq 1.$ Then we get 
 \begin{align*}
    A(\tau,\alpha) > A_1(\tau):=\tau^2(\sin\tau-\tau\cos\tau)^2-(\tau^2-\sin^2\tau)^2 > 0
 \end{align*}
 for $\tau\in(0,\pi).$ The last inequality can be proven as follows. Since $A_1$ is a difference of squares, it is a product of two factors, one of which is evidently positive on this interval, and the other is equal to
	\begin{align*}
%	A_1^+(\tau)&=\tau \sin\tau-\tau^2\cos\tau+\tau^{2}-\sin^{2}\tau,\\
	A_1^-(\tau)&:=\tau \sin\tau-\tau^2\cos\tau-\tau^{2}+\sin^{2}\tau >0.
	\end{align*}
  To see that this inequality  holds we rewrite the function $A_1^-$ by using well-known trigonometric formulae for double angles, namely
 $$
 A_1^-(2\tau)=4 \cos\tau(\cos\tau \sin^{2}\tau + \tau\sin\tau - 2\tau^{2}\cos\tau).
 $$
 The positivity of $A_1^-$ on $\tau\in(0,\pi)$ follows by estimating the function in brackets from below by its Taylor polynomial of degree twelve in $\tau=0$. The polynomial is positive up to its first root, which is approximately $2.75>\pi/2$.
\end{proof}

\begin{remark}
    A finer inspection of the first zeros and signs of functions  $A(\tau,\alpha)$, $B(\tau,\alpha)$ gives also an upper estimate for the conjugate (re-scaled) time, namely 
    $$
    \pi=\tau_{cut}\leq\tau_{conj} < \tau_0,
    $$
    where $\tau_0\approx 1,43\pi$ is the least positive solution of equation $\tan(\tau)=\tau$. Indeed,  it is easy to see that  $A(\tau,\alpha)\leq A_0(\tau):=\tau^2(\sin\tau-\tau\cos\tau)^2$, and thus the first zero $\tau_A$ of function $A$ viewed as a function of $\tau$ lies between $\pi$ and $\tau_0.$ It means that the continuous function $\sigma(\tau,\alpha)=\sqrt{-B/A}$ satisfies $\sigma(\pi,\alpha)=0$ and  also $\lim_{\tau\to\tau_A}\sigma(\tau,\alpha)=\infty.$ The other way around, for each $\sigma,$ there exists $\tau_{conj}\in(\pi,\tau_A)$ such that $A\sigma^2+B=0$. Therefore, $\pi<\tau_{conj}<\tau_A<\tau_0,$ as depicted in Figure \ref{AB_graphs}.
    \end{remark}
		\begin{figure}[ht]
	\begin{center}
		\includegraphics[height=47mm]{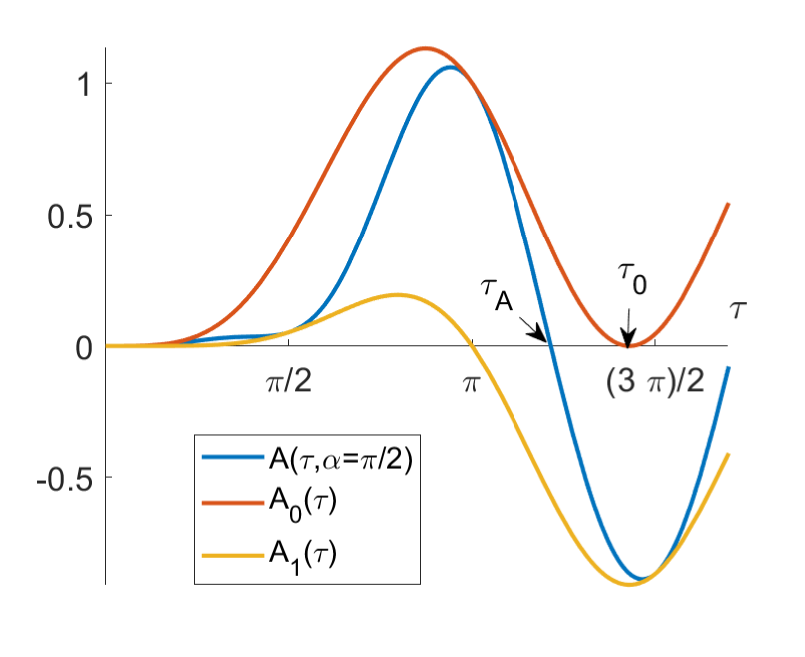}
		\includegraphics[height=47mm]{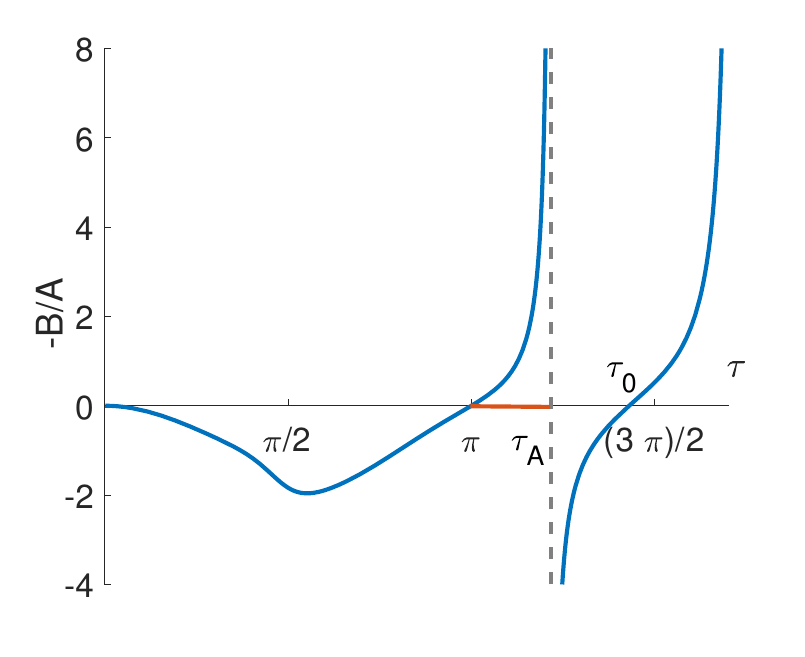}
			\caption{Left: Estimation of function $A(\tau,\alpha)$ (functions plotted for $\alpha=\pi/2$). Right: Graph of function $\sigma^2(\tau,\alpha)=-B/A(\tau,\alpha)$. For each $\sigma > 0$ there is a $\tau_{conj}\in(\pi,\tau_A)$ such that $-B/A(\tau_{conj})=\sigma^2$, i.e. $J(\rho,\alpha,\sigma,\tau_{conj})=0$. 
		}
		\label{AB_graphs}
	\end{center}
\end{figure}

\subsection*{Step 3}
We will prove that the map $\widehat{\exp}: N_1\to N_2$ is proper. Concretely, we will show that each sequence escaping from $N_1$ maps under $\widehat{\exp}$ to a sequence escaping from $N_2,$ by inspecting the equations $\eqref{invariants}$ and \eqref{invariants2}. Let us go through all types of such sequences.
\begin{itemize}
    \item for $\tau\to \tau_{cut}=\pi$  all points $(\pi,\rho,\sigma,\alpha)\in N_1$ are mapped to $\widehat{Cut}_0$ by the definition of the cut locus, and thus do not belong to $N_2$.
    \item for $\sigma\to \infty$ we get $|\ell|\to \infty$ unless we have also $\rho\to\infty.$ In such a case we get $x\to 0$, and then it depends on how fast tends the parameter $\rho$ to infinity relative to $\sigma$. If $\sigma/\rho\to \infty$, we still have $|\ell|\to \infty$.
    If $\sigma/\rho\to 0$, then the image of such sequence tends to the origin and thus is not in $N_2$.  If $\sigma/\rho\to L < \infty$, then $|\ell|\to 2\tau L$ but we have $|y|\to 0$ and $\varphi\to 0.$ Such a point does not even lie in $G_2$ (it is a point reachable by the linear type of geodesics, see Proposition \ref{characterization}).  
    \item for $\rho\to \infty$ the image tends to the origin unless also $\sigma\to \infty$ but this case has been already discussed.
    \item for $\rho\to 0$ we have $x\to\infty.$
\end{itemize}

\subsection*{Step 4}
The previous steps imply that $\widehat{\exp}|_{N_1}$ is a covering. We prove that it is a diffeomorphism, i.e., a one-fold covering, by showing that the induced map on the fundamental groups $[\widehat{\exp}]:\: \pi_1(N_1)\to\pi_1(N_2)$ is not only injective but it is also surjective. Actually, we show that both groups are isomorphic to $\mathbb{Z}$ and that a generator is mapped to a generator.

The fact that $\pi_1(N_1)=\mathbb{Z}$ is obvious since $N_1$ can be viewed as a four-dimensional open half-cylinder without its central axis, $N_1=(0,\pi)\times S^1\times \mathbb{R}^+\times\mathbb{R}^+$. The target space is described as $N_2=\widehat{G}_2\setminus \widehat{\operatorname{Cut}}_0,$ where $\widehat{G}_2=\mathbb{R}\times\mathbb{R}^+\times\mathbb{R}^+\times (0,\pi)$ in the parameterization by $(x,|\ell|,|y|,\varphi)$, and $\widehat{\text{Cut}}_0$ is formed by points in the subspace $\{x=0\}$  such that inequality \eqref{cut_locus_condition} holds. This inequality says that the angle $\varphi$ lies in an interval around the right angle, concretely  $\varphi\in \langle\varphi_0,\pi-\varphi_0\rangle$, where $\varphi_0$ is defined in terms of $|\ell|,|y|$ by \eqref{phi_0}, see the description of the cut locus in section \ref{cut_locus_description}. Hence the space $N_2$ is a union of the simply connected half-spaces $\{x>0\}$, $\{x<0\}$ and  two connected components of the subspace $\{x=0\}$. These components can be recognized by the value of $\varphi$ - we have either $\varphi<\pi/2$ or $\varphi>\pi/2$. It is easy to see that the homotopy class of a loop in $N_2$ depends only on transversal intersections with the subspace $\{x=0\}$. Moreover, if we have two consequent intersections that happen in the same component, then the arc between these points is homotopic to the line segment in the subspace defined by these points, and the line segment can be contracted to a point. Consequently, each loop is homotopic to a loop that does not have any consequent intersections in the same connected component of $\{x=0\}$. Such a loop has the same number of intersections with both components, and this number expresses, how many times the loop is wired around the cut locus, see the visualization of the cut locus in Figure \ref{cut_locus_visualization}. It means that $\pi_1(N_2)=\mathbb{Z}$.
%, and we see that it is generated by a loop that has exactly one intersection with both components of the subspace $\{x=0\}$.

Now, let us consider a generator of the fundamental group  $N_1$, i.e., a loop that goes once around the axis of the cylinder. Concretely, let us consider the loop in $N_1$ defined by $\tau=\pi/2$ and $\rho=\sigma=1$, i.e. the loop $$[0,1]\to N_1: t\to (\pi/2,2\pi t,1,1).$$
Its image  under $\widehat{\exp}$ is obtained by substituting the corresponding parameters into formulas for invariants \eqref{invariants} and \eqref{invariants2}
  \begin{align}
  \begin{split}
     x &=2 \cos(2\pi t), \\
   |\ell| &=\sqrt{ 4\sin^{2}(2\pi t)+ \pi^{2}}
   , \\
      |y|&= \sqrt{4 \sin^{2}(2\pi t) +\pi^{2}/4},\\
  y\cdot \ell &=-3\pi\sin(2\pi t).  \\
 %  \varphi&=\operatorname{arccos}\frac{-3\pi\sin(2\pi t)}{\sqrt{( 4\sin^{2}(2\pi t)+ \pi^{2})(4 \sin^{2}(2\pi t) +\pi^{2}/4})}
  \end{split}
   \end{align}
This loop in $N_2$ meets the subspace $\{x=0\}$ twice. For $t=1/4$ the intersection lies in the connected component $\varphi>\pi/2$ since $y\cdot \ell=-3\pi<0$. On the other hand, for 
$t=3/4,$ the loop goes through the other component since we get $y\cdot \ell=3\pi>0$ and thus $\varphi<\pi/2$. We conclude that the loop goes once around the cut locus, and thus, it is a generator of the fundamental group of $N_2.$ Consequently, the induced map on fundamental groups $[\widehat{\exp}]$ is bijective, and together with the previous steps, we conclude that $\widehat{\exp}:N_1\to N_2$ is a global diffeomorphism.

\end{document}